 \newtheorem{thm}{Theorem}[subsection]
 \newtheorem{exam}[thm]{Example}
 \newtheorem{prop}[thm]{Proposition}
 \theoremstyle{definition}
 \newtheorem{defn}[thm]{Definition}
 \theoremstyle{observation}
 \theoremstyle{remark}
 \newtheorem{rem}[thm]{Remark}
 \numberwithin{equation}{subsection}
\begin{document}

\title[Rough Sets in Graphs Using Similarity Relations]
 {Rough Sets in Graphs Using Similarity Relations}

\author{Imran Javaid*, Shahroz Ali, Shahid Ur Rehman, Aqsa Shah}

\address{Centre for Advanced Studies in Pure and Applied Mathematics, Bahauddin Zakariya University, Multan,
Pakistan.}

\email{imran.javaid@bzu.edu.pk, shahroz.ali0304@gmail.com, shahidurrehman1982@gmail.com, aqsa6977@gmail.com}



\keywords{Rough Sets, Automorphisms, Orbit, Reduct, Essential, Discernibility Matrix}. 
\thanks{$^*$ Corresponding author: imran.javaid@bzu.edu.pk}


\dedicatory{}



\begin{abstract}
In this paper, we use theory of rough set to study graphs using the
concept of orbits. We investigate the indiscernibility partitions
and approximations of graphs induced by orbits of graphs. We also study rough membership functions, essential sets,
discernibility matrix and their relationships for graphs.
\end{abstract}

\maketitle

\section*{Introduction}
Rough set theory (RST), introduced by Pawlak \cite{16}, provides
elegant and powerful techniques to extract  information from data
associated with various structures. It also provides terminology to
minimize the number of significant attributes in data tables, also
called information systems. It is always interesting and insightful
to know about the objects having similar characteristics to simplify
the study of objects under consideration. The objects which are
indistinguishable from each other are called information granules.
Zadeh introduced and studied information granularity in \cite{29}.
Information granules are objects placed together due to similarity
of features of interest and are dealt together yielding partition of
objects based upon features. Many applications of granular computing
appear in connection with fuzzy set theory \cite{11,20}, information
science \cite{14,28}, data mining \cite{10,13,27}, formal concept
analysis \cite{12,27}, database theory \cite{9,21} and rough set
theory \cite{2,17,18,19,24} etc. in fields, such as medicine,
economy, finance, business, environment, electrical and computer
engineering. Granular computing and rough sets have been studied in
context of graphs \cite{5,6}, digraph \cite{15} and hyper graphs
\cite{3}. The reader is referred to the following \cite{4,22,23,25}
for further reading in this area of study. Chiaselloti et al.
\cite{5,6,7}  have studied well known families of graphs using the
notion of neighbourhood of vertices. They have remarked two vertices
$x, \, y$ as indiscernible with respect to a vertex subset
$\mathcal{A}$ when these vertices have same neighbours with respect
to $\mathcal{A}$. Symbolically,
$x\equiv_{\mathcal{A}}y\Leftrightarrow N(x)\cap \mathcal{A}=N(y)\cap
\mathcal{A}$. In \cite{7}, they have remarked $\mathcal{A}$ as
symmetry axis and two vertices are indiscernible with respect to
$\mathcal{A}$ if they are in a `symmetrical' position with respect
to all vertices of $\mathcal{A}$. Equivalence classes induced by
indiscernibility relation of $\mathcal{A}$ are the granules and
yield partition of the vertex set of the graph, denoted by
$\gamma_{\mathcal{A}}(\mathcal{G})$. Rough set theory can be thought
of as a particular type of granular computing. Chiaselloti et al.
\cite{7} referred the triplet
$(\mathcal{G},\mathcal{A},\gamma_{\mathcal{A}}(\mathcal{G}))$ as
$\mathcal{A}$-granular reference system. Intersection of granular
computing and rough set theory provides tools to analyse graphs,
digraphs and hypergraphs. The information about granules will
simplify studies associated to graphs and can give important
insights about graphs, enriching their applications in various
fields and introduce new perspectives.

By associating rough set and graphs, properties of rough sets can be
understood with the help of graphs and those of graphs can be
explained by using RST. Graphs have been studied using rough set
\cite{4,6,15} and rough sets may yield graphs \cite{23}. A strategy
developed in one domain can give insights in another domain. It is
worth noting that the partition in the granular reference system may
be induced using different paradigms and it will give useful
insights in various directions. Making comparisons, identifying
similarities and studying differences are canonical approaches of
study. In this paper, we study graphs using the idea of orbit of
vertices and we remark two vertices $u,v$ as indiscernible when
orbits intersect with $\mathcal{A}$ in the same set or equivalently
two vertices $u,v$ as indiscernible if they belong to same orbits of
elements of $\mathcal{A}$. Chiaselotti et al. \cite{5} remarked two
vertices as indiscernible if their open neighbourhoods intersect
with a given set $\mathcal{A}$ in the same set. We consider a
different approach from that of Chiaselotti et al. \cite{5} and
consider similarity relation yielded by graph automorphisms to fill
the entries of information table. We will study indiscernibility
relations introduced with the help of orbits and prove several
results including parallel to those of Chaiselotti et al. \cite{5,
6, 7} using the concept of orbits of vertices. We present
introductory terminology needed for the paper first.

A graph $\mathcal{G}$ is an ordered pair consists on two finite sets
$\mathcal{V}(\mathcal{G})$ and $\mathcal{E}(\mathcal{G})$ called
vertex set and edge set respectively. In this paper, all considered
graphs are simple and non-trivial, represented by
$\mathcal{G}=(\mathcal{V}(\mathcal{G}),\mathcal{E}(\mathcal{G}))$,
when there is no doubt, we simply write
$\mathcal{G}=(\mathcal{V},\mathcal{E})$. Two vertices are called
adjacent or neighbours of each other if there is an edge between
them. The cardinality of vertex set $\mathcal{V}$ and edge set
$\mathcal{E}$ of $\mathcal{G}$ is termed as $order$ and $size$ of
$\mathcal{G}$ respectively. For $x,y\in\mathcal{V}$, $x\sim y$ means
$x,y$ are adjacent to each other by an edge and $x \nsim y$ means
$x,y$ are non-adjacent. $\textit{Open neighbour}$ of
$x\in\mathcal{V}$ is defined as
$\mathcal{N}_{\mathcal{G}}(x)=\{y\in\mathcal{V}: x\sim y\}$.
Similarly, $\textit{closed neighbour}$ of $x$ is defined as
$\mathcal{N}_{\mathcal{G}}[x]=\mathcal{N}_{\mathcal{G}}(x)\cup
\{x\}$.

A $permutation$ of a set is a bijection from the set to itself. A
graph $automorphism$ is a permutation of vertex set that preserves
adjacency and non-adjacency of the vertices. Alternatively,
$\pi:\mathcal{V}(\mathcal{G})\rightarrow \mathcal{V}(\mathcal{G})$
is an automorphism of a graph $\mathcal{G}$ if for all
$x,y\in\mathcal{V}(\mathcal{G}),\pi(x)\sim \pi(y)$ if and only if
$x\sim y$. The collection of all automorphisms of graph
$\mathcal{G}$ forms a group, named as the $\textit{automorphism
group}$ of the graph $\mathcal{G}$. We use $\Gamma(\mathcal{G})$ or
$\Gamma$ if $\mathcal{G}$ is clear from context, to represent the
group of automorphisms of graph $\mathcal{G}$. For
$y\in\mathcal{V}$, the $orbit$ of $y$, denoted by $\mathcal{O}(y)$
is defined as $\mathcal{O}(y)=\{\pi(y):\pi\in\Gamma\}$ and for any
set $\mathcal{A}$, the orbit of $\mathcal{A}$ is represented by
$\mathcal{O}(\mathcal{A})$ is define as
$\mathcal{O}(\mathcal{A})=\cup\mathcal{O}(x_i)\forall
x_i\in\mathcal{A}$. Two vertices in same orbit are called $similar$
vertices.

A partition $\gamma$ on a finite set $\mathcal{V}$ is a family of
non-empty subsets $B_1,B_2,\dots, B_n$ of $\mathcal{V}$ such that
$B_i\cap B_j=\emptyset$ for all $i\neq j$ and
$\cup_{i=1}^nB_i=\mathcal{V}$. The subsets $B_1,B_2,\dots, B_n$ are
termed as blocks of $\gamma$, we write $\gamma:=B_1|B_2|\dots|B_n$
to represent that $\gamma$ is a partition of sets with blocks
$B_1,B_2,\dots, B_n$. If $x \in \mathcal{V}$, we denote $\gamma (x)$
as the block of $\gamma$ containing the element $x$. An $\textit{information table}$ is denoted by
$\mathcal{I}=\langle \mathcal{U}, \mathbb{A},
\mathcal{V}al,\mathcal{F} \rangle$ is a quadruple, where
$\mathcal{U}$ represents the universal set of objects, $\mathbb{A}$
represents the attribute set, $\mathcal{V}al$ is a set of outcomes
and $\mathcal{F}:\mathcal{U}\times \mathbb{A}\rightarrow
\mathcal{V}al$ represents an information map. The information table
is called $\textit{Boolean}$ if $\mathcal{V}al=\{0,1\}$. For the purpose of our paper, both universal set $\mathcal{U}$ and attribute set $\mathbb{A}$ are the vertex set $\mathcal{V}$ of the graph $\mathcal{G}$ and the
$indescernibility$ relation $\equiv_{\mathcal{A}}$ between the
two vertices of the graphs is an equivalence relation which
depends upon a set of attributes $\mathcal{A}\subseteq \mathcal{V}$ defined as: $x\equiv_{\mathcal{A}} y$ if and only if for all $z\in \mathcal{A}$,
if $z\in\mathcal{O}(x)$ then $z\in\mathcal{O}(y)$. If there exists $z\in \mathcal{A}$ such that $z$ does not belong to $\mathcal{O}(x)$ and $\mathcal{O}(y)$ simultaneously, then we say $x$ and $y$ do not belong the same equivalence class and write it as $\urcorner(x\equiv_{\mathcal{A}}y)$. Equivalence
classes also called $\mathcal{A}$-granules of object $x$ under
consideration of $\equiv_{\mathcal{A}}$ are represented by
$\mathcal{O}_{\mathcal{A}}(x)$ and the indiscernibility partition of
$\mathcal{I}$ with respect to $\mathcal{A}$ is the family of all
equivalence classes, $i.e.,$
$\gamma_{\mathcal{A}}(\mathcal{G}):=\{\mathcal{O}_{\mathcal{A}}(x):x\in
\mathcal{V}\}$. Given an attribute subset $\mathcal{A}$ and object
subset $\mathcal{Q}$, lower and upper approximations of
$\mathcal{Q}$ by considering the information presented by
$\mathcal{A}$ are:
\begin{center}
$\mathbb{L}_{\mathcal{A}}(\mathcal{Q})=\{x\in\mathcal{V}:\mathcal{O}_{\mathcal{A}}(x)\subseteq
\mathcal{Q}\}=\cup\{\mathcal{D}\in\gamma_{\mathcal{A}}(\mathcal{G}):\mathcal{D}\subseteq \mathcal{Q}\}$\\
$\mathbb{U}_{\mathcal{A}}(\mathcal{Q})=\{x\in\mathcal{V}:\mathcal{O}_{\mathcal{A}}(x)\cap
\mathcal{Q}\neq\emptyset\}=\cup\{\mathcal{D}\in\gamma_{\mathcal{A}}(\mathcal{G}):\mathcal{D}\cap
\mathcal{Q}\neq\emptyset\}$
\end{center}
In general, $\mathbb{L}_{\mathcal{A}}(\mathcal{Q})$ is subset of
$\mathcal{Q}$ while $\mathbb{U}_{\mathcal{A}}(\mathcal{Q})$ is a
superset of $\mathcal{Q}$. A subset $\mathcal{Q}$ is termed as
$\mathcal{A}\textit{-exact}$ if and only if
$\mathbb{L}_{\mathcal{A}}(\mathcal{Q})=\mathbb{U}_{\mathcal{A}}(\mathcal{Q})$,
$\mathcal{A}\textit{-rough}$ otherwise. Let
$\mathcal{A},\mathcal{Q}\subseteq\mathcal{V}$ then rough membership
function is defined by:
\begin{center}
$\mu_{\mathcal{Q}}^{\mathcal{A}}(x)=\frac{|\mathcal{O}_{\mathcal{A}}(x)\cap
\mathcal{Q}|}{|\mathcal{O}_{\mathcal{A}}(x)|}=\frac{|[x]_{\mathcal{A}}\cap
\mathcal{Q}|}{|[x]_{\mathcal{A}}|}.$
\end{center}
For $\mathcal{A},\mathcal{D}\subseteq\mathcal{V}$ the positive region
is defined as
$POS_{\mathcal{A}}(\mathcal{D})=\{x\in\mathcal{V}:\mathcal{O}_{\mathcal{A}}(x)
\subseteq\mathcal{O}_{\mathcal{D}}(x)\}$, and the number
$deg_{\mathcal{A}}({\mathcal{D}})=\frac{|POS_{\mathcal{A}}({\mathcal{D}})|}{|\mathcal{V}|}$
is called $\mathcal{A}$-degree dependency of $\mathcal{D}$.

Important terms associated with rough set theory include the indiscernibility relations,
lower and upper approximations, reducts, essential sets  and the discernibility matrix.  The reduct \cite{16} of an
information table consists of the set of attributes which provide
same information and characterization as does the set of all attributes. Essential sets and the core \cite{16} of an information table usually consists of the set of attributes which are considered most important in
characterization of an information table. The discernibility matrix
\cite{22} is a square matrix with $ij^{th}$ entry consists of
attributes for which the attribute value is different for objects
$x_i$ and $x_j$.


\par This paper is organized as follows: In section 1, we introduce and study the
indiscernibility partitions and we also discuss the action of
automorphisms in connection with indiscernibility relations. In
section 2, we study lower and upper approximations of graphs in
terms of orbits. We also examine the rough membership function and
dependency measures for graphs. In section 3, we discuss essential
sets, discernibility matrix and their relationships. Conclusions are given as last section of the paper.


%
%
%
%
%
%

\section{Indiscernibility Partitions of Graphs}
An undirected simple graph $\mathcal{G}$ is described by an
information table represented by $\mathcal{I}(\mathcal{G})$. Suppose
that the vertex set $\mathcal{V}(\mathcal{G})=\{x_1,x_2,\dots,x_n\}$
and both universal and attribute sets of information table
$\mathcal{I}(\mathcal{G})$ are equal to $\mathcal{V}$ and
characterize the information map as follows: for $x_i \in
\mathcal{A}$, $\mathcal{F}(x_i,x_j)=1$ if $x_j\in \mathcal{O}(x_i)$
and $\mathcal{F}(x_i,x_j)=0$ if $x_j\notin \mathcal{O}(x_i)$.

In theorem \ref{4}, we show how indiscernibility relation
$\equiv_{\mathcal{A}}$ and the concept of orbit are related.

\begin{thm}\label{4}
For an attribute subset $\mathcal{A}\subseteq \mathcal{V}$, any two
vertices $x,y\in \mathcal{V}$ are indiscernible if and only if
$\mathcal{O}(x)\cap \mathcal{A}=\mathcal{O}(y)\cap \mathcal{A}$.
\end{thm}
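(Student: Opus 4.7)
The plan is to unpack the definition of $\equiv_{\mathcal{A}}$ and translate the membership conditions on $z$ into a direct equality of the intersections $\mathcal{O}(x)\cap\mathcal{A}$ and $\mathcal{O}(y)\cap\mathcal{A}$. Since $\equiv_{\mathcal{A}}$ is an equivalence relation, the symmetric reading of its definition applies: $x\equiv_{\mathcal{A}} y$ means that for every $z\in\mathcal{A}$, we have $z\in\mathcal{O}(x)$ if and only if $z\in\mathcal{O}(y)$ (this is also what one gets by negating the explicit description of $\urcorner(x\equiv_{\mathcal{A}}y)$ given in the preliminaries). So the whole argument reduces to a routine biconditional chase.

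For the forward direction, I would assume $x\equiv_{\mathcal{A}}y$ and argue by double inclusion. Take an arbitrary $z\in\mathcal{O}(x)\cap\mathcal{A}$; then $z\in\mathcal{A}$ and $z\in\mathcal{O}(x)$, so by indiscernibility $z\in\mathcal{O}(y)$, hence $z\in\mathcal{O}(y)\cap\mathcal{A}$. The reverse inclusion follows by symmetry of $\equiv_{\mathcal{A}}$, which yields $\mathcal{O}(x)\cap\mathcal{A}=\mathcal{O}(y)\cap\mathcal{A}$.

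For the converse, I would assume $\mathcal{O}(x)\cap\mathcal{A}=\mathcal{O}(y)\cap\mathcal{A}$ and verify the defining condition of $\equiv_{\mathcal{A}}$. Let $z\in\mathcal{A}$ be arbitrary. If $z\in\mathcal{O}(x)$, then $z\in\mathcal{O}(x)\cap\mathcal{A}=\mathcal{O}(y)\cap\mathcal{A}$, so $z\in\mathcal{O}(y)$; the opposite implication is identical. Hence no $z\in\mathcal{A}$ can witness $\urcorner(x\equiv_{\mathcal{A}}y)$, and therefore $x\equiv_{\mathcal{A}}y$.

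There is no genuine obstacle: the result is essentially a restatement of the definition once one observes that $z\in\mathcal{O}(x)\cap\mathcal{A}$ is just the conjunction $z\in\mathcal{A}\wedge z\in\mathcal{O}(x)$. The only subtlety worth flagging in the write-up is the symmetry step, which is implicit in the authors' definition via the equivalence-relation property and their description of the negation $\urcorner(x\equiv_{\mathcal{A}}y)$.
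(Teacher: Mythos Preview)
Your proposal is correct and follows essentially the same route as the paper's proof: a double-inclusion argument for the forward direction and a direct verification of the defining condition for the converse. The only cosmetic difference is that the paper phrases the membership conditions via the information map $\mathcal{F}$ (writing $\mathcal{F}(x,z)=1$ in place of $z\in\mathcal{O}(x)$), whereas you work directly with the orbit-membership formulation; these are equivalent by the definition of $\mathcal{F}$ given at the start of Section~1.
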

\begin{proof}
If $z\in\mathcal{O}(x)\cap\mathcal{A}$ then
$\mathcal{F}(x,z)=1=\mathcal{F}(y,z)$ by definition it implies that
$z\in\mathcal{O}(y)\cap\mathcal{A}$. Hence $\mathcal{O}(x)\cap
\mathcal{A}\subseteq\mathcal{O}(y)\cap \mathcal{A}$. Using similar
argument, $\mathcal{O}(y)\cap \mathcal{A}\subseteq\mathcal{O}(x)\cap
\mathcal{A}$. Hence $\mathcal{O}(x)\cap
\mathcal{A}=\mathcal{O}(y)\cap
\mathcal{A}$.\\
Now suppose $\mathcal{O}(x)\cap \mathcal{A}=\mathcal{O}(y)\cap
\mathcal{A}$, then $z\in\mathcal{O}(x)\cap \mathcal{A}$ implies that
$\mathcal{F}(x,z)=1$ which implies that $\mathcal{F}(y,z)=1$. Hence
$\mathcal{F}(x,z)=\mathcal{F}(y,z)$, which implies that
$x\equiv_{\mathcal{A}}y$.
\end{proof}
Before, we provide a sketch of indiscernibility partitions of a
graph $\mathcal{G}$ in the form of orbits, please note the
relationship of similarity of vertices of graph with
indiscernibility of vertices. If $\mathcal{O}(x)=\mathcal{O}(y)$ in
$\mathcal{G}$ then $x\equiv_\mathcal{A} y$ for any attribute subset
$\mathcal{A}$. Moreover, converse is not true in general. Consider a
path graph $P_5$ with the vertex set $\{x_1,x_2,x_3,x_4,x_5\}$ and
$x_i$ is adjacent to $x_{i+1}$ for $i=1,2,3,4$. The orbits of $P_5$
are $\{x_1,x_5\}$, $\{x_2,x_4\}$ and $\{x_3\}$. For
$\mathcal{A}=\{x_1\}$ be an attribute set the
$\mathcal{A}$-equivalence classes are $\{x_1,x_5\}$,
$\{x_2,x_3,x_4\}$. Check that $x_2\equiv_\mathcal{A} x_3$ but $x_2$
is not
similar to $x_3$ in $P_5$, hence converse is not true in general.\\

For a subset $\mathcal{A}$ of $\mathcal{V}$, set
$\mathcal{O}(\mathcal{A})=\cup_{x\in \mathcal{A}}\mathcal{O}(x)$. In
the following proposition, we give a complete sketch for the
indiscernibility partitions of graph $\mathcal{G}$ in the form of
orbits.

\begin{prop}\label{22}
    Let $\mathcal{G}$ be a graph and $\mathcal{A}\subseteq \mathcal{V}$ be a given attribute subset.
    If $\mathcal{B}_\mathcal{A}(\mathcal{G})=(\mathcal{O}(\mathcal{A}))^c$ and
    $\mathcal{A}=\{x_1,x_2,\ldots,x_k\}$ such that $\mathcal{O}(x_i)\cap \mathcal{O}(x_j)=\emptyset$
    for each $i\neq j$ and $\mathcal{O}(\mathcal{A})=\cup^{k}_{i=1}\mathcal{O}(x_i)$, then
    $\gamma_{\mathcal{A}}(\mathcal{G})=\mathcal{B}_\mathcal{A}(\mathcal{G})|
    \mathcal{O}(x_1)|\mathcal{O}(x_2)|
    \ldots|\mathcal{O}(x_k)$.
\end{prop}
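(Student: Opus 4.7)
The plan is to apply Theorem~\ref{4} directly and read off the equivalence classes of $\equiv_{\mathcal{A}}$ by computing $\mathcal{O}(y)\cap\mathcal{A}$ for each $y\in\mathcal{V}$. Since orbits under $\Gamma$ partition $\mathcal{V}$, each $y$ either satisfies $\mathcal{O}(y)=\mathcal{O}(x_j)$ for a unique $j$ (precisely when $y\in\mathcal{O}(x_j)$), or else $\mathcal{O}(y)\cap\mathcal{O}(x_i)=\emptyset$ for every $i$ (precisely when $y\in\mathcal{B}_{\mathcal{A}}(\mathcal{G})$). This dichotomy is the engine of the proof.

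First I would verify that the listed blocks form a partition of $\mathcal{V}$. Pairwise disjointness of the $\mathcal{O}(x_i)$ is given by hypothesis, and $\mathcal{B}_{\mathcal{A}}(\mathcal{G})=(\mathcal{O}(\mathcal{A}))^c$ is disjoint from each $\mathcal{O}(x_i)\subseteq\mathcal{O}(\mathcal{A})$; together with $\mathcal{O}(\mathcal{A})=\bigcup_{i=1}^k\mathcal{O}(x_i)$ this shows the union of the blocks is $\mathcal{V}$ (with the understanding that an empty $\mathcal{B}_{\mathcal{A}}(\mathcal{G})$ is simply dropped from the list).

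Next I would compute $\mathcal{O}(y)\cap\mathcal{A}$ case by case. If $y\in\mathcal{B}_{\mathcal{A}}(\mathcal{G})$, then $\mathcal{O}(y)$ meets none of the $\mathcal{O}(x_i)$, hence $\mathcal{O}(y)\cap\mathcal{A}=\emptyset$ since $\mathcal{A}\subseteq\bigcup_{i=1}^k\mathcal{O}(x_i)$. If $y\in\mathcal{O}(x_j)$, then $\mathcal{O}(y)=\mathcal{O}(x_j)$, and the disjointness hypothesis forces $\mathcal{O}(x_j)\cap\mathcal{A}=\{x_j\}$: any other $x_i\in\mathcal{A}$ lying in $\mathcal{O}(x_j)$ would yield a nonempty $\mathcal{O}(x_i)\cap\mathcal{O}(x_j)$, contradicting the assumption. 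Two vertices $x,y$ thus share the same intersection with $\mathcal{A}$ iff they lie in a common block from the proposed list, and Theorem~\ref{4} then identifies the $\mathcal{A}$-granules exactly with the blocks $\mathcal{B}_{\mathcal{A}}(\mathcal{G}),\mathcal{O}(x_1),\ldots,\mathcal{O}(x_k)$.

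The argument is essentially a case analysis and poses no genuine obstacle; the one point that deserves care is the role of the pairwise disjointness of the orbits of elements of $\mathcal{A}$. It is precisely this hypothesis that guarantees $\mathcal{O}(x_j)\cap\mathcal{A}=\{x_j\}$, ensuring that distinct orbits yield distinct values of $\mathcal{O}(y)\cap\mathcal{A}$ and are therefore placed in distinct blocks rather than being fused together.
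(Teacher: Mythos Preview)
Your proof is correct and follows essentially the same approach as the paper: both invoke Theorem~\ref{4} and perform a case analysis according to whether $\mathcal{O}(y)\cap\mathcal{A}$ is empty or not, using the disjointness hypothesis to separate the orbits $\mathcal{O}(x_i)$ into distinct blocks. Your presentation is slightly more explicit in computing $\mathcal{O}(x_j)\cap\mathcal{A}=\{x_j\}$ as the distinguishing value on each orbit, whereas the paper argues directly that $x\equiv_{\mathcal{A}}y$ forces $x,y$ into a common $\mathcal{O}(x_i)$; but this is a stylistic difference, not a substantive one.
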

\begin{proof}Let $x,y\in \mathcal{V}$ such that $x\equiv_\mathcal{A} y$ which gives that
$\mathcal{O}(x)\cap \mathcal{A}=\mathcal{O}(y)\cap \mathcal{A}$ and we have the following two cases.\\
    Case 1. Suppose $\mathcal{O}(x)\cap \mathcal{A}=\mathcal{O}(y)\cap \mathcal{A}=\emptyset$,
     clearly $x,y\in \mathcal{B}_\mathcal{A}(\mathcal{G})$ which gives that $\mathcal{B}_\mathcal{A}(\mathcal{G})$
     is an $\mathcal{A}$-equivalence class in $\mathcal{V}$.\\
    Case 2. Suppose $\mathcal{O}(x)\cap \mathcal{A}=\mathcal{O}(y)\cap \mathcal{A}\neq\emptyset$
    implies that $x,y\in \mathcal{O}(\mathcal{A})=\cup^{k}_{i=1}\mathcal{O}(x_i)$ and
    $\mathcal{O}(x_i)\cap \mathcal{O}(x_j)=\emptyset$ gives that $x,y\in \mathcal{O}(x_i)$ for some
    $1\leq i\leq k$ because if $x\in \mathcal{O}(x_i)$ and $y\in \mathcal{O}(x_j)$ for $i\neq j$ then
    $\mathcal{O}(x)\cap \mathcal{A}\neq\mathcal{O}(y)\cap \mathcal{A}$. Note that $\mathcal{O}(x)= \mathcal{O}(y)$
    implies $x\equiv_\mathcal{A} y$, hence
     $\mathcal{O}_\mathcal{A}(x)=\mathcal{O}_\mathcal{A}(y)=\mathcal{O}_\mathcal{A}(x_i)
     =\mathcal{O}(x_i)$
     and $\mathcal{O}_\mathcal{A}(x_i)=\mathcal{O}(x_i)$ for each $i$. \\
    Concluding above two cases and using the fact that $\mathcal{B}_\mathcal{A}(\mathcal{G})$,
    $\mathcal{O}(x_1)$, $\mathcal{O}(x_2),\ldots,\mathcal{O}(x_k)$ gives a partition of $\mathcal{V}$,
    we have $\gamma_{\mathcal{A}}(\mathcal{G})=\mathcal{B}_\mathcal{A}(\mathcal{G})|\mathcal{O}(x_1)|
    \mathcal{O}(x_2)|\ldots|\mathcal{O}(x_k)$.
\end{proof}

\begin{exam}
Consider a graph $\mathcal{G}$ in  FIGURE 1. 
\begin{figure}[h]
\begin{center}
  \includegraphics[width=5.5cm]{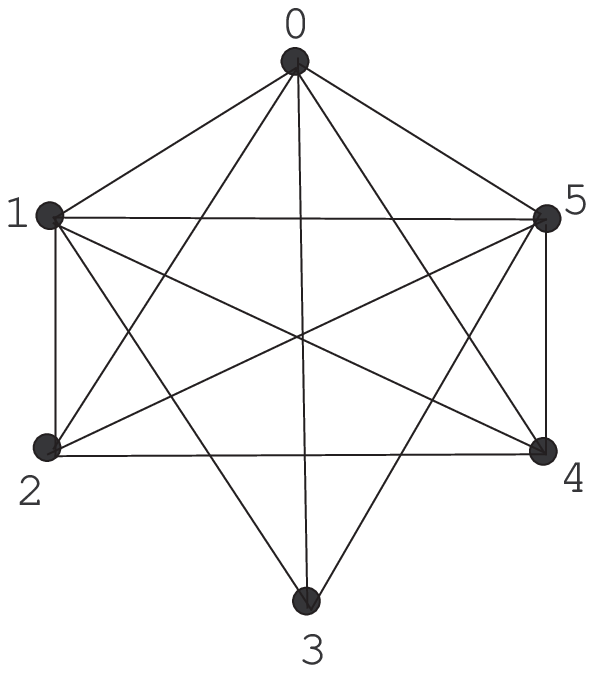}
  \caption{A Graph $\mathcal{G}$}\label{1}
\end{center}
\end{figure}
%
\FloatBarrier In FIGURE 1, subscripts of the vertices are used as
labels. We have
$\mathcal{O}(0)=\{0,5,1\}=\mathcal{O}(5)=\mathcal{O}(1),
\mathcal{O}(4)=\{2,4\}=\mathcal{O}(2),\mathcal{O}(3)=\{3\}$. Fix a
vertex subset $\mathcal{A}=\{1,5\}$, then we have
$0\equiv_{\mathcal{A}} 1 \equiv_{\mathcal{A}} 5$, and similarly
$2\equiv_{\mathcal{A}} 3 \equiv_{\mathcal{A}} 4$ so
$\gamma_{\mathcal{A}}(\mathcal{G})=015|234$.
\end{exam}

It is an interesting question to identify conditions under which for
given two attribute sets $\mathcal{A}_1$ and $\mathcal{A}_2$,
$\gamma_{\mathcal{A}_1}(\mathcal{G})=\gamma_{\mathcal{A}_2}(\mathcal{G})$.
We now provide a necessary and sufficient condition under which the
indiscernible partitions associated to two different attribute sets
are same.

\begin{prop}\label{15}
For an attribute subset $\mathcal{A}\subseteq\mathbb{A}$, define
$\mathcal{B}_{\mathcal{A}}(\mathcal{G})=(\mathcal{O}(\mathcal{A}))^c$.
Let $\mathcal{A}_1,\mathcal{A}_2$ be two attribute subsets, then
$\gamma_{\mathcal{A}_1}(\mathcal{G})=\gamma_{\mathcal{A}_2}(\mathcal{G})$
if and only if one of the following hold;
\begin{enumerate}
  \item $\mathcal{B}_{\mathcal{A}_1}(\mathcal{G})=\mathcal{B}_{\mathcal{A}_2}(\mathcal{G})$
  \item $\mathcal{B}_{\mathcal{A}_1}(\mathcal{G}),\mathcal{B}_{\mathcal{A}_2}(\mathcal{G})$ are either
  empty or consists of only one orbit.
\end{enumerate}
\end{prop}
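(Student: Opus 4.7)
The plan is to reduce everything to the structural description of $\gamma_{\mathcal{A}}(\mathcal{G})$ supplied by Proposition \ref{22}: its blocks are exactly the distinct orbits meeting $\mathcal{A}$, together with one extra block $\mathcal{B}_{\mathcal{A}}(\mathcal{G})=(\mathcal{O}(\mathcal{A}))^{c}$, which is itself a disjoint union of all orbits of $\mathcal{G}$ missing $\mathcal{A}$. A crucial observation I will record first is that when $\mathcal{B}_{\mathcal{A}}(\mathcal{G})$ is empty or is equal to a single orbit, the ``extra'' block is indistinguishable from an orbit block, so $\gamma_{\mathcal{A}}(\mathcal{G})$ collapses to the full orbit partition of $\mathcal{V}$. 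Consequently the indiscernibility partition is uniquely determined by the collection of orbits that are merged inside $\mathcal{B}_{\mathcal{A}}(\mathcal{G})$.

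For the sufficiency direction I would handle the two cases separately. If (1) holds, taking complements gives $\mathcal{O}(\mathcal{A}_1)=\mathcal{O}(\mathcal{A}_2)$, so the families of orbits appearing as non-$\mathcal{B}$ blocks are identical and the $\mathcal{B}$-blocks agree by assumption, whence $\gamma_{\mathcal{A}_1}(\mathcal{G})=\gamma_{\mathcal{A}_2}(\mathcal{G})$ by Proposition \ref{22}. If (2) holds, then by the observation above both $\gamma_{\mathcal{A}_1}(\mathcal{G})$ and $\gamma_{\mathcal{A}_2}(\mathcal{G})$ coincide with the orbit partition of $\mathcal{V}$, and hence with one another.

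For the necessity direction, assume $\gamma_{\mathcal{A}_1}(\mathcal{G})=\gamma_{\mathcal{A}_2}(\mathcal{G})$ and perform a dichotomy on whether some $\mathcal{B}_{\mathcal{A}_i}(\mathcal{G})$ is a union of at least two distinct orbits. If neither is, condition (2) is immediate. Otherwise, suppose without loss of generality that $\mathcal{B}_{\mathcal{A}_1}(\mathcal{G})$ strictly contains an orbit $\mathcal{O}(z)$; then $\mathcal{B}_{\mathcal{A}_1}(\mathcal{G})$ is a block of the common partition which properly contains $\mathcal{O}(z)$. By Proposition \ref{22}, the blocks of $\gamma_{\mathcal{A}_2}(\mathcal{G})$ are either individual orbits (which cannot strictly contain another orbit) or the single set $\mathcal{B}_{\mathcal{A}_2}(\mathcal{G})$; therefore $\mathcal{B}_{\mathcal{A}_1}(\mathcal{G})=\mathcal{B}_{\mathcal{A}_2}(\mathcal{G})$, giving condition (1).

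The main obstacle I foresee is the slight asymmetry and overlap of the stated disjunction: (1) and (2) can hold simultaneously, for instance when both $\mathcal{B}_{\mathcal{A}_i}(\mathcal{G})$ are empty. To keep the necessity argument clean I will phrase the split as ``some $\mathcal{B}_{\mathcal{A}_i}(\mathcal{G})$ contains at least two orbits'' versus its negation, which forces exactly one of the two conclusions in each branch and avoids any double-counting. The only routine care needed is to invoke the fact, implicit in the setup, that $\mathcal{B}_{\mathcal{A}}(\mathcal{G})$ is automatically a union of orbits (being invariant under $\Gamma$), so ``single orbit'' and ``union of two or more orbits'' are the only relevant alternatives.
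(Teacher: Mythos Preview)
Your proposal is correct and follows essentially the same route as the paper: both directions are reduced to Proposition \ref{22}, identifying the blocks of $\gamma_{\mathcal{A}}(\mathcal{G})$ as the orbits meeting $\mathcal{A}$ together with the residual block $\mathcal{B}_{\mathcal{A}}(\mathcal{G})$, and then comparing these block structures for $\mathcal{A}_1$ and $\mathcal{A}_2$. Your necessity argument, splitting on whether some $\mathcal{B}_{\mathcal{A}_i}(\mathcal{G})$ contains at least two orbits and then forcing it to coincide with $\mathcal{B}_{\mathcal{A}_{3-i}}(\mathcal{G})$, is in fact a slightly cleaner formulation of the paper's contrapositive step, but the underlying idea is the same.
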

\begin{proof}
(1) Suppose that
$\mathcal{B}_{\mathcal{A}_1}(\mathcal{G})=\mathcal{B}_{\mathcal{A}_2}(\mathcal{G})$
then $\mathcal{O}(\mathcal{A}_1)=\mathcal{O}(\mathcal{A}_2)$. Hence
by Proposition \ref{22},
$\gamma_{\mathcal{A}_1}(\mathcal{G})=\gamma_{\mathcal{A}_2}(\mathcal{G})$.\\
(2) Suppose that
$\mathcal{B}_{\mathcal{A}_1}(\mathcal{G})=\emptyset$ and
$\mathcal{B}_{\mathcal{A}_2}(\mathcal{G})$ consists of only one
orbit. Then the orbit of $\mathcal{B}_{\mathcal{A}_2}(\mathcal{G})$
must coincide with at least one orbit of
$\gamma_{\mathcal{A}_1}(\mathcal{G})$. Hence by Proposition
\ref{22},
$\gamma_{\mathcal{A}_1}(\mathcal{G})=\gamma_{\mathcal{A}_2}(\mathcal{G})$.
Similarly, if
 $\mathcal{B}_{\mathcal{A}_2}(\mathcal{G})=\emptyset$ and $\mathcal{B}_{\mathcal{A}_1}(\mathcal{G})$
 consists of only one orbit, then $\gamma_{\mathcal{A}_1}(\mathcal{G})=\gamma_{\mathcal{A}_2}(\mathcal{G})$.\\
Now, suppose contrary that
$\mathcal{B}_{\mathcal{A}_1}(\mathcal{G})\neq\mathcal{B}_{\mathcal{A}_2}(\mathcal{G})$
and neither $\mathcal{B}_{\mathcal{A}_1}(\mathcal{G})$ nor
$\mathcal{B}_{\mathcal{A}_2}(\mathcal{G})$ is empty or consists of
only one orbit. By proposition \ref{22}, this implies that
$\gamma_{\mathcal{A}_1}(\mathcal{G})\neq\gamma_{\mathcal{A}_2}(\mathcal{G})$
but
$\gamma_{\mathcal{A}_1}(\mathcal{G})=\gamma_{\mathcal{A}_2}(\mathcal{G})$,
a contradiction.
\end{proof}

Suppose that a graph $\mathcal{G}$ have $n$ orbits then following
propositions are obvious.

\begin{prop}\label{16}
If a graph $\mathcal{G}$ has $n$ orbits and attribute subsets
$\mathcal{A}_1$ and $\mathcal{A}_2$ contain elements of $n$ or $n-1$
orbits then
$\gamma_{\mathcal{A}_1}(\mathcal{G})=\gamma_{\mathcal{A}_2}(\mathcal{G})$.
\end{prop}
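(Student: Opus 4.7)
The plan is to observe that the hypothesis forces each $\mathcal{B}_{\mathcal{A}_i}(\mathcal{G})$ to be either empty or a single orbit of $\mathcal{G}$, after which the result follows immediately from Proposition \ref{15}(2). Equivalently, one can apply Proposition \ref{22} directly and show that in every sub-case $\gamma_{\mathcal{A}_i}(\mathcal{G})$ reduces to the full orbit partition of $\mathcal{G}$.

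First I would carry out a small case analysis for a single attribute subset $\mathcal{A}$. Since distinct orbits are pairwise disjoint, the union $\mathcal{O}(\mathcal{A}) = \bigcup_{x \in \mathcal{A}} \mathcal{O}(x)$ is precisely the disjoint union of those orbits that $\mathcal{A}$ meets. If $\mathcal{A}$ contains elements from all $n$ orbits, then $\mathcal{O}(\mathcal{A}) = \mathcal{V}$ and hence $\mathcal{B}_{\mathcal{A}}(\mathcal{G}) = \emptyset$. If $\mathcal{A}$ contains elements from exactly $n-1$ orbits, then one orbit, say $\mathcal{O}^{*}$, is missed; by disjointness $\mathcal{B}_{\mathcal{A}}(\mathcal{G}) = \mathcal{O}^{*}$, which is a single orbit. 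Applying this dichotomy to both $\mathcal{A}_1$ and $\mathcal{A}_2$ verifies the hypothesis of Proposition \ref{15}(2), which yields the desired equality of indiscernibility partitions.

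If one prefers a self-contained argument, the same conclusion can be read off directly from Proposition \ref{22}: choosing one representative per orbit from $\mathcal{A}_i$ (discarding redundant elements lying in an orbit already represented), the blocks of $\gamma_{\mathcal{A}_i}(\mathcal{G})$ are $\mathcal{B}_{\mathcal{A}_i}(\mathcal{G})$ together with the orbits of the chosen representatives. In the $n$-orbit case these representatives already account for all $n$ orbits; in the $(n-1)$-orbit case the missing orbit is supplied by $\mathcal{B}_{\mathcal{A}_i}(\mathcal{G})$. Either way the block list enumerates exactly the $n$ orbits of $\mathcal{G}$, so both $\gamma_{\mathcal{A}_1}(\mathcal{G})$ and $\gamma_{\mathcal{A}_2}(\mathcal{G})$ coincide with the full orbit partition of $\mathcal{G}$.

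Since the argument is essentially a direct application of the preceding two propositions, I do not anticipate any substantial obstacle; the only subtle point is verifying that in the $(n-1)$-orbit case $\mathcal{B}_{\mathcal{A}_i}(\mathcal{G})$ is exactly one orbit rather than some more complicated subset of $\mathcal{V}$, and this is immediate from the pairwise disjointness of orbits.
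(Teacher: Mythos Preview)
Your proposal is correct and follows essentially the same approach as the paper: both arguments reduce to Proposition~\ref{15} by observing that each $\mathcal{B}_{\mathcal{A}_i}(\mathcal{G})$ is either empty or a single orbit. Your case analysis is in fact slightly cleaner than the paper's, which splits into the subcases $\mathcal{O}(\mathcal{A}_1)=\mathcal{O}(\mathcal{A}_2)$ (invoking part~(1)) and the mixed $n$/$(n{-}1)$ case (invoking part~(2)), whereas you handle all situations uniformly via part~(2); your alternative direct appeal to Proposition~\ref{22} is also sound.
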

\begin{proof}
  Since graph $\mathcal{G}$ has $n$ orbits. If $\mathcal{O}(\mathcal{A}_1)=\mathcal{O}(\mathcal{A}_2)$,
  then $\mathcal{B}_{\mathcal{A}_1}(\mathcal{G})=\mathcal{B}_{\mathcal{A}_2}(\mathcal{G})$
  hence by proposition \ref{15}, $\gamma_{\mathcal{A}_1}(\mathcal{G})=\gamma_{\mathcal{A}_2}(\mathcal{G})$.
  Now suppose that $\mathcal{A}_1$ contains elements of $n$ orbits and $\mathcal{A}_2$
  contains elements of $n-1$ orbits, then by second part of proposition \ref{15},
  $\gamma_{\mathcal{A}_1}(\mathcal{G})=\gamma_{\mathcal{A}_2}(\mathcal{G})$.
\end{proof}

\begin{prop}
If $\mathcal{A}_1$ and $\mathcal{A}_2$ are two subsets of attributes
with elements from at most $n-2$ orbits then
$\gamma_{\mathcal{A}_1}(\mathcal{G})=\gamma_{\mathcal{A}_2}(\mathcal{G})$
if and only if
$\mathcal{O}(\mathcal{A}_1)=\mathcal{O}(\mathcal{A}_2)$.
\end{prop}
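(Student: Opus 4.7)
The plan is to use Proposition \ref{15} directly, treating the hypothesis ``elements from at most $n-2$ orbits'' as a constraint that eliminates one of the two cases there.

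For the reverse implication, I would observe that if $\mathcal{O}(\mathcal{A}_1)=\mathcal{O}(\mathcal{A}_2)$, then by taking complements in $\mathcal{V}$ we get $\mathcal{B}_{\mathcal{A}_1}(\mathcal{G})=\mathcal{B}_{\mathcal{A}_2}(\mathcal{G})$, so condition (1) of Proposition \ref{15} holds and hence $\gamma_{\mathcal{A}_1}(\mathcal{G})=\gamma_{\mathcal{A}_2}(\mathcal{G})$. This direction needs no hypothesis on the number of orbits.

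For the forward implication, assume $\gamma_{\mathcal{A}_1}(\mathcal{G})=\gamma_{\mathcal{A}_2}(\mathcal{G})$. By Proposition \ref{15}, either (1) $\mathcal{B}_{\mathcal{A}_1}(\mathcal{G})=\mathcal{B}_{\mathcal{A}_2}(\mathcal{G})$, or (2) each of $\mathcal{B}_{\mathcal{A}_1}(\mathcal{G})$ and $\mathcal{B}_{\mathcal{A}_2}(\mathcal{G})$ is either empty or a single orbit. The key step is to show that the hypothesis rules out case (2). Indeed, $\mathcal{B}_{\mathcal{A}_i}(\mathcal{G})=(\mathcal{O}(\mathcal{A}_i))^c$ is a union of orbits (since the orbits partition $\mathcal{V}$ and $\mathcal{O}(\mathcal{A}_i)$ is itself a union of orbits). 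If $\mathcal{B}_{\mathcal{A}_i}(\mathcal{G})=\emptyset$, then $\mathcal{O}(\mathcal{A}_i)=\mathcal{V}$, so $\mathcal{A}_i$ meets all $n$ orbits, contradicting the assumption that it meets at most $n-2$. If $\mathcal{B}_{\mathcal{A}_i}(\mathcal{G})$ consists of exactly one orbit, then $\mathcal{O}(\mathcal{A}_i)$ is the union of $n-1$ orbits and $\mathcal{A}_i$ meets $n-1$ orbits, again a contradiction. Thus case (1) must hold, and taking complements gives $\mathcal{O}(\mathcal{A}_1)=\mathcal{O}(\mathcal{A}_2)$.

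The main (only) obstacle is making the case-elimination argument precise, namely that $\mathcal{B}_{\mathcal{A}_i}(\mathcal{G})$ being empty or one orbit forces $\mathcal{A}_i$ to meet at least $n-1$ orbits. This is immediate once one notes that $\mathcal{O}(\mathcal{A}_i)$ is a union of whole orbits, so counting orbits inside $\mathcal{O}(\mathcal{A}_i)$ coincides with counting orbits that $\mathcal{A}_i$ meets. Everything else reduces to a direct application of Proposition \ref{15}.
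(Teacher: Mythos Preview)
Your proposal is correct and follows essentially the same approach as the paper: both directions are reduced to Proposition \ref{15}, and the hypothesis that each $\mathcal{A}_i$ meets at most $n-2$ orbits is used precisely to rule out condition (2) there, forcing $\mathcal{B}_{\mathcal{A}_1}(\mathcal{G})=\mathcal{B}_{\mathcal{A}_2}(\mathcal{G})$ and hence $\mathcal{O}(\mathcal{A}_1)=\mathcal{O}(\mathcal{A}_2)$. Your write-up is in fact slightly more explicit than the paper's in articulating why each $\mathcal{B}_{\mathcal{A}_i}(\mathcal{G})$ must contain at least two orbits.
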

\begin{proof}
  As $\mathcal{O}(\mathcal{A}_1)=\mathcal{O}(\mathcal{A}_2)$, by proposition \ref{15},
  $\gamma_{\mathcal{A}_1}(\mathcal{G})=\gamma_{\mathcal{A}_2}(\mathcal{G})$.
  Suppose contrary that $\mathcal{O}(\mathcal{A}_1)\neq\mathcal{O}(\mathcal{A}_2)$
  but $\gamma_{\mathcal{A}_1}(\mathcal{G})=\gamma_{\mathcal{A}_2}(\mathcal{G})$.
  Since $\mathcal{A}_1$ and $\mathcal{A}_2$ contain elements of at most $n-2$ orbits.
  Hence their complement will consist of the elements of at least two different orbits
  which implies that $\mathcal{B}_{\mathcal{A}_1}(\mathcal{G})\neq\mathcal{B}_{\mathcal{A}_2}(\mathcal{G})$,
  yielding $\gamma_{\mathcal{A}_1}(\mathcal{G})\neq\gamma_{\mathcal{A}_2}(\mathcal{G})$,
  a contradiction.
\end{proof}

In the next proposition, we show that the automorphisms of
$\mathcal{G}$ preserve the structure of blocks in indiscernibility
partition.
\begin{prop}\label{8}
For a graph $\mathcal{G}$, let
$\mathcal{A}\subseteq\mathcal{V}(\mathcal{G})$ be any attribute
subset. If $\eta\in \Gamma(\mathcal{G})$ then
$\gamma_{\mathcal{A}}(\mathcal{G})=\gamma_{\eta(\mathcal{A})}(\mathcal{G})$.
\end{prop}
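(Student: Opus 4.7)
The plan is to reduce this claim to Proposition \ref{15}(1) by showing that $\mathcal{A}$ and $\eta(\mathcal{A})$ generate the same ``outside'' block, i.e.\ $\mathcal{B}_{\mathcal{A}}(\mathcal{G}) = \mathcal{B}_{\eta(\mathcal{A})}(\mathcal{G})$. Since $\mathcal{B}_{\mathcal{A}}(\mathcal{G}) = (\mathcal{O}(\mathcal{A}))^c$, what I really need is $\mathcal{O}(\mathcal{A}) = \mathcal{O}(\eta(\mathcal{A}))$.

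The key observation I would use is that each orbit is setwise invariant under the action of any automorphism. Concretely, for any $a \in \mathcal{V}$ and any $\eta \in \Gamma(\mathcal{G})$, if $z \in \mathcal{O}(a)$ then $z = \pi(a)$ for some $\pi \in \Gamma$, so $\eta(z) = (\eta \circ \pi)(a) \in \mathcal{O}(a)$; applying the same argument to $\eta^{-1}$ gives $\eta(\mathcal{O}(a)) = \mathcal{O}(a)$. In particular $\mathcal{O}(\eta(a)) = \mathcal{O}(a)$, because $\eta(a) \in \mathcal{O}(a)$ and two orbits that share an element are equal.

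From this I would chain the equalities:
\begin{equation*}
\mathcal{O}(\eta(\mathcal{A})) \;=\; \bigcup_{a \in \mathcal{A}} \mathcal{O}(\eta(a)) \;=\; \bigcup_{a \in \mathcal{A}} \mathcal{O}(a) \;=\; \mathcal{O}(\mathcal{A}),
\end{equation*}
and hence $\mathcal{B}_{\mathcal{A}}(\mathcal{G}) = \mathcal{B}_{\eta(\mathcal{A})}(\mathcal{G})$. Proposition \ref{15}(1) then immediately yields $\gamma_{\mathcal{A}}(\mathcal{G}) = \gamma_{\eta(\mathcal{A})}(\mathcal{G})$.

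There is essentially no obstacle here; the only subtlety worth stating carefully is the invariance of orbits under $\eta$, which is just a rephrasing of the fact that the orbits of $\Gamma(\mathcal{G})$ partition $\mathcal{V}$ and that left multiplication by $\eta$ permutes $\Gamma(\mathcal{G})$. If one preferred to avoid invoking Proposition \ref{15}, the same two-line argument can be recast directly through Theorem \ref{4}: for any $x,y \in \mathcal{V}$, the identity $\eta(\mathcal{O}(x) \cap \mathcal{A}) = \mathcal{O}(x) \cap \eta(\mathcal{A})$ (again using $\eta(\mathcal{O}(x)) = \mathcal{O}(x)$) shows that $\mathcal{O}(x) \cap \mathcal{A} = \mathcal{O}(y) \cap \mathcal{A}$ iff $\mathcal{O}(x) \cap \eta(\mathcal{A}) = \mathcal{O}(y) \cap \eta(\mathcal{A})$, so the two indiscernibility relations coincide.
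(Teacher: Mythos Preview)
Your proof is correct and essentially the same as the paper's. Both arguments rest on the single fact that orbits are setwise invariant under any automorphism, hence $\mathcal{A}$ and $\eta(\mathcal{A})$ meet exactly the same orbits; the paper phrases this as ``$\mathcal{O}(x_i)\cap\mathcal{A}\neq\emptyset$ iff $\mathcal{O}(x_i)\cap\eta(\mathcal{A})\neq\emptyset$'' and then invokes Proposition~\ref{22}, while you state it as $\mathcal{O}(\mathcal{A})=\mathcal{O}(\eta(\mathcal{A}))$ and invoke Proposition~\ref{15}(1), but since Proposition~\ref{15}(1) is itself an immediate consequence of Proposition~\ref{22}, the two routes are interchangeable.
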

\begin{proof}
   Suppose $\gamma_{\mathcal{A}}(\mathcal{G})=\mathcal{B}_1|\mathcal{B}_2|\dots|\mathcal{B}_m$ and $\gamma_{\eta(\mathcal{A})}(\mathcal{G})=\mathcal{K}_1|\mathcal{K}_2|\dots|\mathcal{K}_n$.
Let $\mathcal{B}_i=\mathcal{O}_{\mathcal{A}}(x_i)=\{x_j:\mathcal{O}(x_j)\cap \mathcal{A}=\mathcal{O}(x_i)\cap \mathcal{A}\}$ and $\mathcal{K}_i =\mathcal{O}_{\eta({\mathcal{A}})}(x_i)=\{x_j:\mathcal{O}(x_j)\cap \eta(\mathcal{A})=\mathcal{O}(x_i)\cap \eta(\mathcal{A})\}$.
\par
It is easy to see that for $\eta \in \Gamma(\mathcal{G})$,  $\mathcal{O}(x_i)\cap \mathcal{A}\neq \emptyset$ if and only if $\mathcal{O}(x_i)\cap \eta(\mathcal{A}) \neq \emptyset$ for $1\leq i \leq k$ because $x_k \in \mathcal{O}(x_i)\cap \mathcal{A}$ if and only if $\eta(x_k) \in \mathcal{O}(x_i)\cap \eta(\mathcal{A})$. By \ref{22}, $\mathcal{B}_i=\mathcal{K}_i=\mathcal{O}(x_i)$ or $\mathcal{B}_i=\mathcal{K}_i=(\mathcal{O}(\mathcal{A}))^c$ which implies that $\gamma_{\mathcal{A}}(\mathcal{G})=\gamma_{\eta(\mathcal{A})}(\mathcal{G})$.
\end{proof}

For a set of attributes $\mathcal{A}\subseteq \mathcal{V}$,
$IND_{\mathcal{A}}(\mathcal{V}) =\{(x,y)\in
\mathcal{V}^2|\mathcal{O}(x)\cap \mathcal{A}=\mathcal{O}(y)\cap
\mathcal{A}\}.$ An attribute $x \in\mathcal{A}$ is called
dispensable in $\mathcal{A}$ if
$IND_{\mathcal{A}}(\mathcal{V})=IND_{\mathcal{A}\setminus\{x\}}(\mathcal{V})$,
otherwise $x$ is called indispensable with respect to $\mathcal{A}$.
A minimal subset $\mathcal{R}$ of an attribute set $\mathcal{A}$
that yields the same partition as provided by the set of all
attributes is called $reduct$.

\begin{prop}  For a graph $\mathcal{G}$ with a reduct $\mathcal{R}$ and orbits $\mathcal{O}_1,\mathcal{O}_2,
    \ldots,\mathcal{O}_k$, $|\mathcal{R} \cap \mathcal{O}_i|=1$ for all $i=1,2,\ldots,k$ except one $i$.
\end{prop}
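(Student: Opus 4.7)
The plan is to first compute $\gamma_{\mathcal{V}}(\mathcal{G})$ explicitly, and then use Proposition \ref{22} to pin down what an arbitrary subset whose induced partition equals $\gamma_{\mathcal{V}}(\mathcal{G})$ must look like orbit-by-orbit. Since the attribute set $\mathbb{A}$ coincides with $\mathcal{V}=\mathcal{O}_1\cup\cdots\cup\mathcal{O}_k$, we have $\mathcal{B}_{\mathcal{V}}(\mathcal{G})=\emptyset$, so Proposition \ref{22} gives
$$\gamma_{\mathcal{V}}(\mathcal{G})=\mathcal{O}_1|\mathcal{O}_2|\ldots|\mathcal{O}_k.$$
By the definition of a reduct, $\gamma_{\mathcal{R}}(\mathcal{G})=\gamma_{\mathcal{V}}(\mathcal{G})$, so the task reduces to describing those $\mathcal{R}\subseteq\mathcal{V}$ that are minimal with this property.

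Next, I would decompose $\mathcal{R}$ according to orbits. Let $\mathcal{O}_{i_1},\ldots,\mathcal{O}_{i_s}$ be precisely the orbits that $\mathcal{R}$ meets, so that $\mathcal{O}(\mathcal{R})=\mathcal{O}_{i_1}\cup\cdots\cup\mathcal{O}_{i_s}$ and $\mathcal{B}_{\mathcal{R}}(\mathcal{G})=\mathcal{V}\setminus\mathcal{O}(\mathcal{R})$ is the union of the remaining $k-s$ orbits. Picking any one representative from each $\mathcal{R}\cap\mathcal{O}_{i_j}$ satisfies the hypotheses of Proposition \ref{22}, yielding
$$\gamma_{\mathcal{R}}(\mathcal{G})=\mathcal{B}_{\mathcal{R}}(\mathcal{G})\,|\,\mathcal{O}_{i_1}\,|\,\ldots\,|\,\mathcal{O}_{i_s}.$$
For this partition to agree with $\mathcal{O}_1|\ldots|\mathcal{O}_k$, the block $\mathcal{B}_{\mathcal{R}}(\mathcal{G})$ must be a single orbit (it cannot be empty, since that would force $s=k$ and $\mathcal{R}$ to touch every orbit, whereas minimality will soon rule that out as well). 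Hence $s=k-1$: the reduct meets exactly $k-1$ orbits, and misses exactly one of them.

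Finally, I would invoke minimality to control the size of each nonempty intersection. Suppose some $\mathcal{O}_i$ satisfies $|\mathcal{R}\cap\mathcal{O}_i|\geq 2$, and pick distinct $x,x'\in\mathcal{R}\cap\mathcal{O}_i$. Since $x$ and $x'$ lie in the same orbit, $\mathcal{O}(x)=\mathcal{O}(x')$, which gives $\mathcal{O}(\mathcal{R}\setminus\{x'\})=\mathcal{O}(\mathcal{R})$ and therefore $\mathcal{B}_{\mathcal{R}\setminus\{x'\}}(\mathcal{G})=\mathcal{B}_{\mathcal{R}}(\mathcal{G})$. Proposition \ref{15}(1) then yields $\gamma_{\mathcal{R}\setminus\{x'\}}(\mathcal{G})=\gamma_{\mathcal{R}}(\mathcal{G})$, so $x'$ is dispensable, contradicting the minimality of $\mathcal{R}$. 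Hence $|\mathcal{R}\cap\mathcal{O}_i|\le 1$ for every $i$, and combined with $s=k-1$ we conclude that $|\mathcal{R}\cap\mathcal{O}_i|=1$ for exactly $k-1$ indices $i$ while $|\mathcal{R}\cap\mathcal{O}_j|=0$ for the remaining index.

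I do not anticipate a serious obstacle: the only subtle point is ruling out the possibility $s=k$, which is handled automatically by the same dispensability argument because if the reduct met all $k$ orbits, any one of its elements could be removed (its orbit would be recovered as the new $\mathcal{B}_{\mathcal{R}}$), again contradicting minimality. This gives the statement as claimed.
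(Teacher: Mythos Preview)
Your argument is correct and follows the same two-step logic as the paper's proof: at most one orbit can be missed (the paper argues directly that two missed orbits would merge into a single block of $\gamma_{\mathcal{R}}$, while you compute $\gamma_{\mathcal{R}}$ via Proposition~\ref{22}), and no orbit can be met twice (both proofs observe that a second element in $\mathcal{R}\cap\mathcal{O}_i$ is dispensable). You are in fact slightly more thorough: you also explicitly rule out the case $s=k$ by minimality, which the paper's proof does not address, so your version genuinely establishes the ``exactly one'' reading of the statement.
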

\begin{proof}
Suppose there exists two orbits for which $\mathcal{R}\cap
\mathcal{O}_{i}$ and $\mathcal{R}\cap \mathcal{O}_{j}$ is empty then
$\mathcal{O}_{i}\cup \mathcal{O}_{j}$ forms one class. Hence,
$\mathcal{R}$ must have empty intersection with at most one orbit.
Now, suppose that there exists an orbit $\mathcal{O}_{k}$ such that
$|\mathcal{R}\cap \mathcal{O}_{k}| > 1 $. Let $x_{1},\, x_{2}\in
\mathcal{R}\cap \mathcal{O}_{k}$ then
$\gamma_{\mathcal{R}}(\mathcal{I})=\gamma_{\mathcal{R}\setminus
\{x_{2}\}}(\mathcal{I})$. This implies $\mathcal{R}$ is not minimal (a
contradiction). Hence, $|\mathcal{R} \cap \mathcal{O}_i|=1$ for all
$i=1,2,\ldots,k$ except one $i$.
\end{proof}


It is obvious that any attribute subset $\mathcal{A}$ may have more
than one reducts, the set of all reducts of an attribute subset is
denoted by $\mathcal{RED}_{\mathcal{A}}(\mathcal{G})$. Let $\mathcal{K}\in\mathcal{RED}_{\mathcal{A}}(\mathcal{G})$ then
$\gamma_{\mathcal{A}}(\mathcal{G})=\gamma_{\mathcal{K}}(\mathcal{G})=\mathcal{B}_1|
\mathcal{B}_2|\dots|\mathcal{B}_n$. By proposition \ref{8},
$\gamma_{\eta(\mathcal{A})}(\mathcal{G})=\gamma_{(\mathcal{\eta(K)})}(\mathcal{G})
=\mathcal{B}_1|\mathcal{B}_2|\dots|\mathcal{B}_n$. We note that the automorphism of
$\mathcal{G}$ preserves the structure of reducts. Hence we have the following proposition:
\begin{prop}\label{9}
For a graph $\mathcal{G}$, let $\mathcal{A}\subseteq\mathcal{V}$. If
$\eta\in\Gamma(\mathcal{G})$, then
$\mathcal{K}\in\mathcal{RED}_{\mathcal{A}}(\mathcal{G})$ if and only
if
$\eta(\mathcal{K})\in\mathcal{RED}_{\eta(\mathcal{A})}(\mathcal{G})$.

\end{prop}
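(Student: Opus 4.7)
The plan is to leverage Proposition~\ref{8} (automorphism-invariance of the indiscernibility partition) together with the bijectivity of $\eta$ on $\mathcal{V}$. Recall that $\mathcal{K}\in\mathcal{RED}_{\mathcal{A}}(\mathcal{G})$ asserts two things: $\mathcal{K}$ is a subset of $\mathcal{A}$ with $\gamma_{\mathcal{K}}(\mathcal{G})=\gamma_{\mathcal{A}}(\mathcal{G})$, and $\mathcal{K}$ is \emph{minimal} among such subsets. So the forward direction splits naturally into (i) showing $\eta(\mathcal{K})\subseteq\eta(\mathcal{A})$ induces the same partition as $\eta(\mathcal{A})$, and (ii) showing no proper subset of $\eta(\mathcal{K})$ does.

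For (i), since $\eta$ is a bijection on $\mathcal{V}$ the inclusion $\mathcal{K}\subseteq\mathcal{A}$ immediately gives $\eta(\mathcal{K})\subseteq\eta(\mathcal{A})$, and applying Proposition~\ref{8} to both $\mathcal{K}$ and $\mathcal{A}$ gives the chain
\[
\gamma_{\eta(\mathcal{K})}(\mathcal{G})=\gamma_{\mathcal{K}}(\mathcal{G})=\gamma_{\mathcal{A}}(\mathcal{G})=\gamma_{\eta(\mathcal{A})}(\mathcal{G}).
\]
This is essentially the short remark already made in the paragraph preceding the statement; I would just write it out cleanly.

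For (ii), I would argue by contradiction. Suppose there exists $\mathcal{K}'\subsetneq\eta(\mathcal{K})$ with $\gamma_{\mathcal{K}'}(\mathcal{G})=\gamma_{\eta(\mathcal{A})}(\mathcal{G})$. Since $\Gamma(\mathcal{G})$ is a group, $\eta^{-1}\in\Gamma(\mathcal{G})$, and by bijectivity of $\eta$ the proper inclusion is preserved: $\eta^{-1}(\mathcal{K}')\subsetneq\mathcal{K}$. Applying Proposition~\ref{8} with automorphism $\eta^{-1}$ to $\mathcal{K}'$ and to $\eta(\mathcal{A})$ yields
\[
\gamma_{\eta^{-1}(\mathcal{K}')}(\mathcal{G})=\gamma_{\mathcal{K}'}(\mathcal{G})=\gamma_{\eta(\mathcal{A})}(\mathcal{G})=\gamma_{\mathcal{A}}(\mathcal{G}),
\]
producing a strictly smaller subset of $\mathcal{A}$ with the same indiscernibility partition as $\mathcal{A}$, contradicting $\mathcal{K}\in\mathcal{RED}_{\mathcal{A}}(\mathcal{G})$. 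The converse is obtained by symmetry: apply the forward implication to the automorphism $\eta^{-1}$ and the attribute set $\eta(\mathcal{A})$, using that $\eta^{-1}(\eta(\mathcal{A}))=\mathcal{A}$ and $\eta^{-1}(\eta(\mathcal{K}))=\mathcal{K}$.

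The only delicate point is the minimality step: one must be careful to use that $\eta$ is a bijection on all of $\mathcal{V}$ so that \emph{proper} subset inclusions transport faithfully in both directions. Everything else is direct transport through Proposition~\ref{8}, so no further obstacles are anticipated.
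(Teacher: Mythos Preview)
Your proposal is correct and follows essentially the same approach as the paper: the paper also derives the equality $\gamma_{\eta(\mathcal{K})}(\mathcal{G})=\gamma_{\mathcal{K}}(\mathcal{G})=\gamma_{\mathcal{A}}(\mathcal{G})=\gamma_{\eta(\mathcal{A})}(\mathcal{G})$ directly from Proposition~\ref{8} in the paragraph immediately preceding the statement. Your treatment is actually more complete, since you explicitly verify the minimality of $\eta(\mathcal{K})$ via $\eta^{-1}$, whereas the paper simply asserts that automorphisms ``preserve the structure of reducts'' without spelling this out.
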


According to Proposition \ref{9}, for a graph $\mathcal{G}$,
attribute set $\mathcal{A}$ and $\eta\in\Gamma(\mathcal{G})$,
$\mathcal{K}\in\mathcal{RED}_{\mathcal{A}}(\mathcal{G})$ implies
that
$\eta(\mathcal{K})\in\mathcal{RED}_{\eta(\mathcal{A})}((\mathcal{G}))$.
If $\mathcal{K}$ is a reduct for attribute set $\mathcal{A}$ then
$\eta(\mathcal{K})$ does not necessarily belong to
$\mathcal{RED}_{\mathcal{A}}(\mathcal{G})$.
\begin{figure}[h]
  \centering
  \includegraphics[width=5cm]{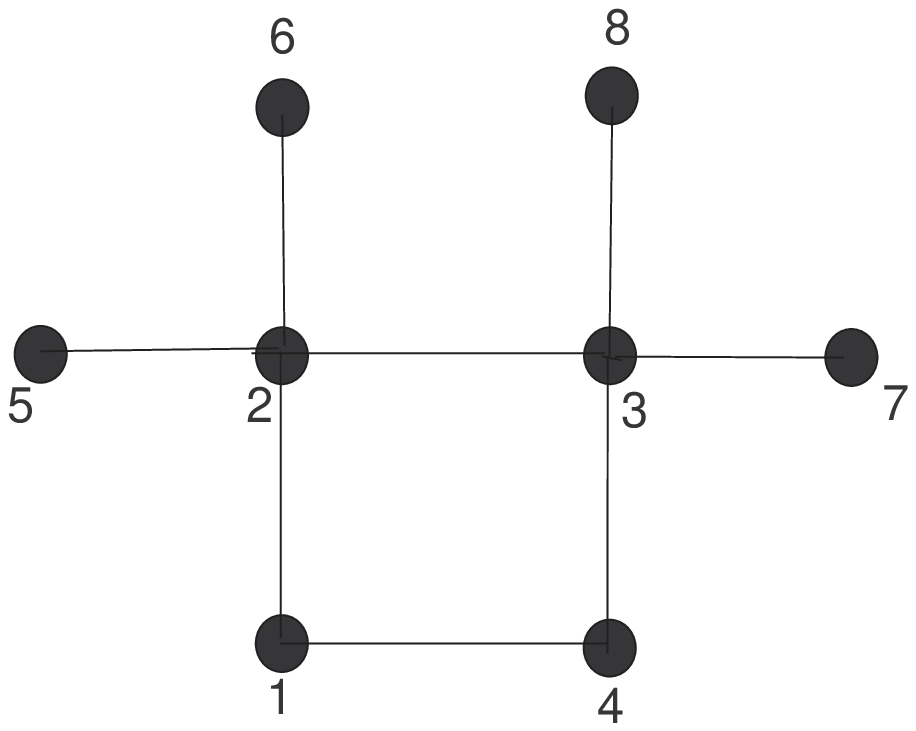}
  \caption{A Graph $\mathcal{G}$}\label{1}
\end{figure}

Consider a graph $\mathcal{G}$ in FIGURE 2. Note that
$\mathcal{O}(1)=\mathcal{O}(4)=\{1,4\}$,
$\mathcal{O}(2)=\mathcal{O}(3)=\{2,3\}$ and
$\mathcal{O}(5)=\mathcal{O}(6)=\mathcal{O}(7)=\mathcal{O}(8)=\{5,6,7,8\}$.
Let $\mathcal{A}=\{1,2,5,6\}$ and take $\eta=(14)(23)(57)(68)$, then
$\mathcal{K}=\{1,2\}$ is a reduct of $\mathcal{A}$ and
$\eta(\mathcal{K})=\{3,4\}$ is a  reduct of $\eta(\mathcal{A})$ but
not that of $\mathcal{A}$.

\par
Let $\texttt{P}(\mathcal{V})$ denotes the set of all partitions of
$\mathcal{V}$.  For two partitions $\gamma_1$ and $\gamma_2$ of
$\mathcal{V}$, $\gamma_1$ is called finer than $\gamma_2$ if all blocks of
$\gamma_1$ are subset of the blocks of $\gamma_2$, denoted by
$\gamma_1\preceq \gamma_2$. It is straightforward to see that for two attribute sets,
$\mathcal{A}_1$ and $\mathcal{A}_2$ of graph $\mathcal{G}$ with
$\mathcal{A}_1\subseteq\mathcal{A}_2$, then
$\gamma_{\mathcal{A}_2}(\mathcal{G}) \preceq \gamma_{\mathcal{A}_1}(\mathcal{G})$.
For a graph $\mathcal{G}$ with orbits $\mathcal{O}_1, \mathcal{O}_2,
\ldots, \mathcal{O}_k$ and attribute sets $\mathcal{A}_1\subseteq
\mathcal{A}_2$,
$\gamma_{\mathcal{A}_2}(\mathcal{G})=\gamma_{\mathcal{A}_1}(\mathcal{G})$,
if $\mathcal{A}_1$ and $\mathcal{A}_2$ have non-empty intersection
with same orbits $\mathcal{O}_i$ for $1\leq i\leq k$. If
$\mathcal{A}_1\subseteq \mathcal{A}_2$ and there exists some
$\mathcal{O}_{i}$ $(1\leq i\leq k)$ such that $\mathcal{A}_1\cap
\mathcal{O}_{i}=\emptyset$ and $\mathcal{A}_2\cap \mathcal{O}_{i}\neq
\emptyset$ then
$\gamma_{\mathcal{A}_2}(\mathcal{G})\prec \gamma_{\mathcal{A}_1}(\mathcal{G})$.
From this discussion, we conclude that a partition consisting of
orbits of the graphs is the finest partition of the vertex set in
$\texttt{P}(\mathcal{V})$.



Now, we introduce three sets, namely $\mathcal{A}$-interior set,
$\mathcal{A}$-exterior set and $\mathcal{A}$-delimiting set as
follows:

\begin{defn}\label{31}
For an attribute subset $\mathcal{A}\subseteq\mathbb{A}$, let
$x\in\mathcal{V}$. Then
\begin{itemize}
\item $x$ is called $\mathcal{A}$-interior, if $\mathcal{O}(x)\subseteq\mathcal{A}$.
\item $x$ is called $\mathcal{A}$-exterior, if $\mathcal{O}(x)\subseteq\mathcal{V}\backslash\mathcal{A}$.
\item $x$ is called $\mathcal{A}$-delimiting, if $\mathcal{O}_{\mathcal{A}}(x)\neq\emptyset$ and
$\mathcal{O}_{\mathcal{V}\backslash\mathcal{A}}(x)\neq\emptyset$.
\end{itemize}
Represent by $Int(\mathcal{A})$, $Ext(\mathcal{A})$ and
$Del(\mathcal{A})$ respectively, the subset of all
$\mathcal{A}$-interior, $\mathcal{A}$-exterior and
$\mathcal{A}$-delimiting vertices of graph $\mathcal{G}$.
\end{defn}

Note that the family of all subsets of definition \ref{31} is a set
partition of $\mathcal{V}(\mathcal{G})$ and for an attribute subset
$\mathcal{A}\subseteq\mathcal{V}(\mathcal{G})$, and
$\mathcal{A}^c=\mathcal{V}\backslash\mathcal{A}$ then
$Int(\mathcal{A})=Ext(\mathcal{A}^c)$ and
$Del(\mathcal{A})=Del(\mathcal{A}^c)$. For an attribute subset
$\mathcal{A}\subseteq\mathcal{V}(K_n)$, following is
straightforward.
\begin{description}
  \item[$(i)$] If $|\mathcal{A}|<n$ then $Int(\mathcal{A})=Ext(\mathcal{A})=\emptyset$ and
  $Del(\mathcal{A})=\mathcal{V}(K_n)$.
  \item[$(ii)$] If $|\mathcal{A}|=n$ then $Del(\mathcal{A})=Ext(\mathcal{A})=\emptyset$ and
  $Int(\mathcal{A})=\mathcal{V}(K_n)$.
  \item[$(iii)$] If $\mathcal{A}=\emptyset$ then $Int(\mathcal{A})=Del(\mathcal{A})=\emptyset$
  and $Ext(\mathcal{A})=\mathcal{V}(K_n)$.
\end{description}

\begin{prop}
For an attribute subset $\mathcal{A}\subseteq\mathbb{A}$, a
non-empty $Ext(\mathcal{A})$ is a block of
$\mathcal{A}$-indiscernible partition
$\gamma_{\mathcal{A}}(\mathcal{G})$.
\end{prop}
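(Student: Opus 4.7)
The plan is to recognize $Ext(\mathcal{A})$ as the set $\mathcal{B}_{\mathcal{A}}(\mathcal{G})=(\mathcal{O}(\mathcal{A}))^{c}$ appearing in Proposition \ref{22}, and then invoke that proposition to conclude it is a block of $\gamma_{\mathcal{A}}(\mathcal{G})$.

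First, I would verify the set-theoretic identity $Ext(\mathcal{A})=\mathcal{B}_{\mathcal{A}}(\mathcal{G})$. By Definition \ref{31}, $x\in Ext(\mathcal{A})$ means $\mathcal{O}(x)\subseteq\mathcal{V}\setminus\mathcal{A}$, which is equivalent to $\mathcal{O}(x)\cap\mathcal{A}=\emptyset$. Since orbits are the equivalence classes under the action of $\Gamma(\mathcal{G})$, for any $a\in\mathcal{V}$ one has $a\in\mathcal{O}(x)$ if and only if $\mathcal{O}(a)=\mathcal{O}(x)$, if and only if $x\in\mathcal{O}(a)$. Therefore $\mathcal{O}(x)\cap\mathcal{A}=\emptyset$ precisely when $x\notin\mathcal{O}(a)$ for every $a\in\mathcal{A}$, i.e. when $x\notin\bigcup_{a\in\mathcal{A}}\mathcal{O}(a)=\mathcal{O}(\mathcal{A})$. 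Hence $Ext(\mathcal{A})=(\mathcal{O}(\mathcal{A}))^{c}=\mathcal{B}_{\mathcal{A}}(\mathcal{G})$.

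Next, I would apply Proposition \ref{22}. Choose representatives $x_{1},\dots,x_{k}$ of the distinct orbits that meet $\mathcal{A}$, so that $\mathcal{O}(x_{i})\cap\mathcal{O}(x_{j})=\emptyset$ for $i\neq j$ and $\mathcal{O}(\mathcal{A})=\bigcup_{i=1}^{k}\mathcal{O}(x_{i})$. Proposition \ref{22} then gives $\gamma_{\mathcal{A}}(\mathcal{G})=\mathcal{B}_{\mathcal{A}}(\mathcal{G})\,|\,\mathcal{O}(x_{1})\,|\,\ldots\,|\,\mathcal{O}(x_{k})$. Since the hypothesis guarantees $Ext(\mathcal{A})=\mathcal{B}_{\mathcal{A}}(\mathcal{G})$ is non-empty, it appears as a bona fide block of this partition.

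The only delicate point is the non-emptiness proviso, which is needed so that $\mathcal{B}_{\mathcal{A}}(\mathcal{G})$ really is a block of the partition and not a spurious empty entry in the listing from Proposition \ref{22}; the rest of the argument is just translating between the ``disjoint from $\mathcal{A}$'' and ``outside $\mathcal{O}(\mathcal{A})$'' descriptions of the same set, so no real obstacle is anticipated.
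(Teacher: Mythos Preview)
Your argument is correct, but it proceeds differently from the paper. The paper gives a direct, self-contained verification using the information map $\mathcal{F}$: it first shows that any two vertices $x,x'\in Ext(\mathcal{A})$ satisfy $\mathcal{F}(x,y)=\mathcal{F}(x',y)=0$ for every $y\in\mathcal{A}$, hence $x\equiv_{\mathcal{A}}x'$; and then it shows that if $z\equiv_{\mathcal{A}}z'$ for some $z'\in Ext(\mathcal{A})$, then $z$ must also lie in $Ext(\mathcal{A})$ (otherwise some $y\in\mathcal{A}$ would give $\mathcal{F}(y,z)=1\neq 0=\mathcal{F}(y,z')$). Your route instead establishes the identity $Ext(\mathcal{A})=(\mathcal{O}(\mathcal{A}))^{c}=\mathcal{B}_{\mathcal{A}}(\mathcal{G})$ and reads off the conclusion from the explicit description of $\gamma_{\mathcal{A}}(\mathcal{G})$ in Proposition~\ref{22}. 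Your approach is more structural and makes the result an immediate corollary of the earlier classification, while the paper's approach is elementary and independent of Proposition~\ref{22}. One small point: Proposition~\ref{22} is phrased for $\mathcal{A}=\{x_{1},\dots,x_{k}\}$ with pairwise disjoint orbits, so when you pass to representatives you are implicitly using that $\gamma_{\mathcal{A}}(\mathcal{G})=\gamma_{\{x_{1},\dots,x_{k}\}}(\mathcal{G})$ whenever $\mathcal{O}(\mathcal{A})=\mathcal{O}(\{x_{1},\dots,x_{k}\})$; this is immediate from Theorem~\ref{4}, but worth stating explicitly.
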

\begin{proof}
Let $x,x\prime\in Ext(\mathcal{A})$ then for all $y\in\mathcal{A}$,
we have $\mathcal{F}(x,y)=\mathcal{F}(x\prime,y)=0$, so
$x\equiv_{\mathcal{A}}x\prime$. If $z\in \mathcal{V}(\mathcal{G})$
such that for some $z\prime \in Ext(\mathcal{A})$,
$z\equiv_{\mathcal{A}}z\prime$ then
$\mathcal{O}_{\mathcal{A}}(z)=\emptyset$ implying that $z \in
Ext(\mathcal{A})$. Because if there exists a vertex
$y\in\mathcal{O}_{\mathcal{A}}(z)$ then
$1=\mathcal{F}(y,z)\neq\mathcal{F}(y,z\prime)=0$, which is
contradiction to the assumption that $z\equiv_{\mathcal{A}}z\prime$.
Therefore, $\mathcal{O}_{\mathcal{A}}(z)=\emptyset$ and $z\in
Ext(\mathcal{A})$.
\end{proof}

\begin{rem}
It is straightforward to observe that the $Int(\mathcal{A})$ and
$Del(\mathcal{A})$ need not be a block of
$\gamma_{\mathcal{A}}(\mathcal{G})$. Further, $Int(\mathcal{A})$ and
$Del(\mathcal{A})$ are blocks of $\gamma_{\mathcal{A}}(\mathcal{G})$
if and only if it consists of only one orbit.
\end{rem}
\begin{prop}
For an attribute subset $\mathcal{A}$, if $x,y\in
Int(\mathcal{A})\cup Del(\mathcal{A})$, then
$x\equiv_{\mathcal{A}}y$ if and only if
$\mathcal{O}(x)=\mathcal{O}(y)$.
\end{prop}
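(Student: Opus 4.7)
The plan is to handle the two implications separately. For the easy direction ($\Leftarrow$), if $\mathcal{O}(x)=\mathcal{O}(y)$ then trivially $\mathcal{O}(x)\cap\mathcal{A}=\mathcal{O}(y)\cap\mathcal{A}$, and Theorem \ref{4} immediately yields $x\equiv_{\mathcal{A}}y$. This direction does not even need the hypothesis that $x,y\in Int(\mathcal{A})\cup Del(\mathcal{A})$.

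For the nontrivial direction ($\Rightarrow$), assume $x\equiv_{\mathcal{A}}y$, so by Theorem \ref{4} we have $\mathcal{O}(x)\cap\mathcal{A}=\mathcal{O}(y)\cap\mathcal{A}$. I would split into cases according to where $x$ and $y$ sit. If both $x,y\in Int(\mathcal{A})$, then $\mathcal{O}(x)\subseteq\mathcal{A}$ and $\mathcal{O}(y)\subseteq\mathcal{A}$, so intersecting with $\mathcal{A}$ simply recovers the orbits and we get $\mathcal{O}(x)=\mathcal{O}(y)$ at once. If both $x,y\in Del(\mathcal{A})$, then $\mathcal{O}(x)\cap\mathcal{A}=\mathcal{O}(y)\cap\mathcal{A}$ is a nonempty subset of both orbits (nonemptiness is exactly the delimiting hypothesis applied to either vertex). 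Picking any $z$ in this common intersection gives $z\in\mathcal{O}(x)\cap\mathcal{O}(y)$; since the orbits of $\Gamma(\mathcal{G})$ partition $\mathcal{V}$, any two orbits that share an element must coincide, so $\mathcal{O}(x)=\mathcal{O}(y)$.

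The only remaining case is the mixed one, say $x\in Int(\mathcal{A})$ and $y\in Del(\mathcal{A})$ (the reverse is symmetric). Here $\mathcal{O}(x)\cap\mathcal{A}=\mathcal{O}(x)$, so $\mathcal{O}(x)=\mathcal{O}(y)\cap\mathcal{A}\subseteq\mathcal{O}(y)$, and the partition property of orbits forces $\mathcal{O}(x)=\mathcal{O}(y)$. But then $\mathcal{O}(y)=\mathcal{O}(x)\subseteq\mathcal{A}$, which would put $y$ into $Int(\mathcal{A})$, contradicting $y\in Del(\mathcal{A})$. Hence this mixed case cannot occur, and in every admissible case we conclude $\mathcal{O}(x)=\mathcal{O}(y)$.

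There is no substantial obstacle; the proof is essentially bookkeeping on the orbit partition together with Theorem \ref{4}. The only subtle point is to be explicit that orbits of the automorphism group form a partition of $\mathcal{V}$, so that two orbits sharing any single element must be equal — this fact is used in both the delimiting-delimiting case and in ruling out the mixed case.
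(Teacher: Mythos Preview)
Your proof is correct and uses the same ingredients as the paper's: Theorem \ref{4}, the fact that orbits partition $\mathcal{V}$, and the observation that the hypothesis $x,y\in Int(\mathcal{A})\cup Del(\mathcal{A})$ guarantees $\mathcal{O}(x)\cap\mathcal{A}$ and $\mathcal{O}(y)\cap\mathcal{A}$ are nonempty. The paper handles the forward direction by a brief contradiction argument (assume $\mathcal{O}(x)\neq\mathcal{O}(y)$ and produce a discerning $z\in\mathcal{A}$), whereas you argue it directly via a case split, but the substance is the same.
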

\begin{proof}
  Suppose that $x,y\in Int(\mathcal{A})\cup Del(\mathcal{A})$ and $\mathcal{O}(x)=\mathcal{O}(y)$,
  then by definition of indiscernibility, $x\equiv_{\mathcal{A}}y$. Conversely, suppose that
  $x\equiv_{\mathcal{A}}y$ and $\mathcal{O}(x)\neq \mathcal{O}(y)$. Then there exists $z \in \mathcal{A}$
  which will belong to at most one of $\mathcal{O}(x)$ or $\mathcal{O}(y)$ because if no such
  $z \in \mathcal{A}$ belongs to $\mathcal{O}(x)$ or $\mathcal{O}(y)$ then $x,y\in Ext(\mathcal{A})$.
  Therefore, $\urcorner(x\equiv_{\mathcal{A}}y)$, contradiction.
  Hence $\mathcal{O}(x)=\mathcal{O}(y)$.
\end{proof}

\section{Upper and Lower Approximations in Graphs}

In this section, lower and upper approximations of various subsets of vertices are studied using similarity relations.
\par

Theorem \ref{4}, provides information about the behavior of objects
in the indiscernibility relation $\equiv_{\mathcal{A}}$.
Indiscernibility relations can be considered as a type of symmetry
relations as for the attribute subset $\mathcal{A}$.
Using the fact that for any two
distinct vertices $x, y\in \mathcal V (\mathcal{G}) $ either $\mathcal{O}(x)=
\mathcal{O}(y)$ or $\mathcal{O}(x) \cap \mathcal{O}(y)=\emptyset$, it is straightforward to
note that every proper subset $X$ of
$\mathcal{V(\mathcal{G})}$ is  rough if $\mathcal{O}(x) =\mathcal{V(\mathcal{G})}$. Also
note that for $x, y \in \mathcal{V(\mathcal{G})}$ with $O(x)\neq O(y)$ and $O(x)\cup O(y) = \mathcal{V(\mathcal{G})}$, a
subset X of $\mathcal{V(\mathcal{G})}$ is rough if and only if $X
\neq \mathcal{O}(x), \, X \neq \mathcal{O}(y)$ and $X \neq
\mathcal{V(\mathcal{G})}$.

 Note that the indiscernibility partition of complete
graph on $n$ vertices consists of only one orbit yielding one block.
If $\mathcal{V}(K_n)=\{x_1,\dots,x_n\},$ then
$\gamma_{\mathcal{A}}(K_n)=x_1,\dots,x_n$ for any attribute subset
$\mathcal{A}\subseteq K_n$ and every proper subset $\mathcal{Q}$ of
$\mathcal{V}(K_n)$ is $\mathcal{A}$-rough because
$\mathbb{L}_\mathcal{A}(\mathcal{Q})=\emptyset$ and
$\mathbb{U}_\mathcal{A}(\mathcal{Q})=\mathcal{V}(K_n)$. A subset
$\mathcal{Q}$ of $\mathcal{V}(K_n)$ is $\mathcal{A}$-exact if and
only if $\mathcal{Q}=\mathcal{V}(K_n)$.

A given graph $\mathcal{G}$ in which $\mathcal{V}=\mathcal{V}_1\cup
\mathcal{V}_2$, $\mathcal{V}_1\cap \mathcal{V}_2=\emptyset$, $x\sim
y$ for each $x\in \mathcal{V}_1$, $y\in \mathcal{V}_2$ and $x\nsim
y$ if and only if $x,y\in \mathcal{V}_1$ or $x,y\in \mathcal{V}_2$,
is called a \emph{complete bipartite graph}. For a complete
bipartite graph $\mathcal{K}_{m,n}=(\mathcal{V}_1|\mathcal{V}_2)$
where $\mathcal{V}_1=\{x_1,x_2,\dots,x_m\}$ and
$\mathcal{V}_2=\{y_1,y_2,\dots,y_n\}$,
$\gamma_{\mathcal{A}}(\mathcal{K}_{m,n})=x_1,x_2,\dots,x_m|y_1,y_2,\dots,y_n$,
for any non-empty attribute subset $\mathcal{A}$. \\
In the following proposition, we study rough sets in complete bipartite graphs.
\begin{prop}\label{17}
    If $\mathcal{G}$ is a complete bipartite graph then a subset
    $\mathcal{Q}$ of $\mathcal{V}$ is rough with respect to attributes
    $\mathcal{A}$ if and only if $\mathcal{Q}\neq \mathcal{V}_1$, $\mathcal{Q}\neq \mathcal{V}_2$
    and $\mathcal{Q}\neq \mathcal{V}$, where $\mathcal{V}_1,\mathcal{V}_2$ are partites of $\mathcal{G}$.
\end{prop}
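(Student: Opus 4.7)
The plan is to reduce everything to the partition computation already given in the excerpt. Immediately before the proposition, the authors observe that for the complete bipartite graph $\mathcal{K}_{m,n}=(\mathcal{V}_1|\mathcal{V}_2)$ and any non-empty attribute subset $\mathcal{A}\subseteq\mathcal{V}$, the indiscernibility partition is the two-block partition
\[
\gamma_{\mathcal{A}}(\mathcal{K}_{m,n})=\mathcal{V}_1\,|\,\mathcal{V}_2.
\]
(This is consistent with Proposition \ref{22}: the orbits here are $\mathcal{V}_1$ and $\mathcal{V}_2$, so $\mathcal{B}_\mathcal{A}(\mathcal{G})$ is either empty or equal to the orbit not met by $\mathcal{A}$.) Hence the blocks available for forming lower and upper approximations are exactly $\mathcal{V}_1$ and $\mathcal{V}_2$.

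For the ``if'' direction (exactness of the three exceptional sets), I would note that each of $\mathcal{V}_1$, $\mathcal{V}_2$ and $\mathcal{V}=\mathcal{V}_1\cup\mathcal{V}_2$ is a union of blocks of $\gamma_{\mathcal{A}}(\mathcal{K}_{m,n})$. Using the set-theoretic expressions
\[
\mathbb{L}_{\mathcal{A}}(\mathcal{Q})=\bigcup\{\mathcal{D}\in\gamma_{\mathcal{A}}(\mathcal{G}):\mathcal{D}\subseteq\mathcal{Q}\},\qquad
\mathbb{U}_{\mathcal{A}}(\mathcal{Q})=\bigcup\{\mathcal{D}\in\gamma_{\mathcal{A}}(\mathcal{G}):\mathcal{D}\cap\mathcal{Q}\neq\emptyset\},
\]
one reads off immediately that $\mathbb{L}_{\mathcal{A}}(\mathcal{Q})=\mathbb{U}_{\mathcal{A}}(\mathcal{Q})=\mathcal{Q}$ when $\mathcal{Q}\in\{\mathcal{V}_1,\mathcal{V}_2,\mathcal{V}\}$, i.e.\ $\mathcal{Q}$ is $\mathcal{A}$-exact.

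For the ``only if'' direction I would argue by contrapositive: assume $\mathcal{Q}\neq\mathcal{V}_1,\mathcal{V}_2,\mathcal{V}$ (implicitly nonempty, as is customary for rough-set subjects) and split into three cases based on which parts $\mathcal{Q}$ meets. \textbf{Case (i):} $\mathcal{Q}\subsetneq\mathcal{V}_1$. Then neither $\mathcal{V}_1$ nor $\mathcal{V}_2$ is contained in $\mathcal{Q}$, so $\mathbb{L}_{\mathcal{A}}(\mathcal{Q})=\emptyset$, while $\mathcal{Q}\cap\mathcal{V}_1\neq\emptyset$ gives $\mathbb{U}_{\mathcal{A}}(\mathcal{Q})=\mathcal{V}_1\neq\emptyset$. \textbf{Case (ii):} $\mathcal{Q}\subsetneq\mathcal{V}_2$, symmetric. \textbf{Case (iii):} $\mathcal{Q}\cap\mathcal{V}_1\neq\emptyset$ and $\mathcal{Q}\cap\mathcal{V}_2\neq\emptyset$, but $\mathcal{Q}\neq\mathcal{V}$. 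Then at least one of $\mathcal{V}_1,\mathcal{V}_2$ is not contained in $\mathcal{Q}$, so $\mathbb{L}_{\mathcal{A}}(\mathcal{Q})\subsetneq\mathcal{V}$, while $\mathbb{U}_{\mathcal{A}}(\mathcal{Q})=\mathcal{V}_1\cup\mathcal{V}_2=\mathcal{V}$. In every case $\mathbb{L}_{\mathcal{A}}(\mathcal{Q})\neq\mathbb{U}_{\mathcal{A}}(\mathcal{Q})$, so $\mathcal{Q}$ is $\mathcal{A}$-rough.

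There is no serious obstacle: once the two-block partition is in hand the proof is finite case-checking. The only subtlety worth flagging is the tacit nonempty assumption on $\mathcal{Q}$ (the empty set is trivially exact), and that the statement is cleanest when $m\neq n$ so that the automorphism group does not fuse $\mathcal{V}_1$ and $\mathcal{V}_2$ into a single orbit; the cited partition formula should be read with this understanding.
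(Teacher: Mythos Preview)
Your proof is correct and follows essentially the same route as the paper's: both directions are handled by case analysis on which of $\mathcal{V}_1,\mathcal{V}_2$ the set $\mathcal{Q}$ meets or contains, using the two-block partition $\gamma_{\mathcal{A}}(\mathcal{K}_{m,n})=\mathcal{V}_1|\mathcal{V}_2$. Your Case~(iii) compresses the paper's three sub-cases into a single line, and your remark about $m\neq n$ is a genuine subtlety the paper leaves implicit.
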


\begin{proof}
Let $\mathcal{Q}$ be a rough set in $\mathcal{G}$. If
$\mathcal{Q}=\mathcal{V}$ then clearly $\mathcal{Q}$ is exact, hence
$\mathcal{Q}\neq \mathcal{V}$. Now if $\mathcal{Q}=\mathcal{V}_1$
then for each $x\in \mathcal{V}_2$, $\mathcal{O}(x)\cap
\mathcal{Q}=\emptyset$ which gives that $\mathcal{Q}$ is exact.
Similar arguments holds for $\mathcal{Q}=\mathcal{V}_2$. Hence,
$\mathcal{Q}\neq \mathcal{V}_1$, $\mathcal{Q}\neq \mathcal{V}_2$ and
$\mathcal{Q}\neq \mathcal{V}$.

Conversely, suppose $\mathcal{Q}\neq \mathcal{V}_1$,
$\mathcal{Q}\neq \mathcal{V}_2$ and $\mathcal{Q}\neq \mathcal{V}$,
we need to prove that $\mathbb{L}_\mathcal{A}(\mathcal{Q})\neq \mathbb{U}_\mathcal{A}(\mathcal{Q})$.
Since $\mathcal{Q}\neq \mathcal{V}_1$ so we have three cases:\\
(1) Suppose $\mathcal{V}_1\cap \mathcal{Q}=\emptyset$ and
$\mathcal{V}_2\cap \mathcal{Q}\neq\emptyset$ then $\mathcal{Q}\neq
\mathcal{V}_2$ which implies that $\mathcal{Q}\subset
\mathcal{V}_2$. Now for any $x\in \mathcal{V}$, if $x\in
\mathcal{V}_1$ then $\mathcal{O}(x)\cap \mathcal{Q}=\emptyset$
because $\mathcal{O}(x)=\mathcal{V}_1$. Moreover, for $x\in
\mathcal{V}_2$, $\mathcal{O}(x)=\mathcal{V}_2$ which implies that
$\mathbb{L}_\mathcal{A} (\mathcal{Q})=\emptyset$ and
$\mathbb{U}_\mathcal{A} (\mathcal{Q})
     =\mathcal{V}_2$. Hence, $\mathcal{Q}$ is a rough set.\\
(2) For the case $\mathcal{V}_1\cap \mathcal{Q}\neq\emptyset$ and
$\mathcal{V}_2\cap \mathcal{Q}=\emptyset$ then $\mathcal{Q}\neq
\mathcal{V}_1$ which implies that $\mathcal{Q}\subset
\mathcal{V}_1$. Now for any $x\in \mathcal{V}$, if $x\in
\mathcal{V}_1$ then $\mathcal{O}(x)\cap \mathcal{Q}\neq\emptyset$
because $\mathcal{O}(x)=\mathcal{V}_1$. Moreover, for $x\in
\mathcal{V}_2$, $\mathcal{O}(x)=\mathcal{V}_2$ which implies that
$\mathbb{L}_\mathcal{A} (\mathcal{Q})=\emptyset$ and
$\mathbb{U}_\mathcal{A} (\mathcal{Q})
    =\mathcal{V}_1$. Hence, $\mathcal{Q}$ is a rough set.\\
(3) Now suppose $\mathcal{V}_1\cap \mathcal{Q}\neq\emptyset$ and
$\mathcal{V}_2\cap \mathcal{Q}\neq\emptyset$
then we have three sub-cases:\\
i) Suppose $\mathcal{V}_1\cap \mathcal{Q}=\mathcal{V}_1$ then
$\mathbb{L}_\mathcal{A} (\mathcal{Q})= \mathcal{V}_1$
and $\mathbb{U}_\mathcal{A} (\mathcal{Q})=\mathcal{V}$ which gives that $\mathcal{Q}$ is rough.\\
ii) Suppose $\mathcal{V}_2\cap \mathcal{Q}=\mathcal{V}_2$ then for
every $x\in \mathcal{V}_2$ we have $x\in \mathbb{L}_\mathcal{A}
(\mathcal{Q})= \mathcal{V}_2$ and $\mathbb{U}_\mathcal{A}
(\mathcal{Q})=\mathcal{V}$
which gives that $\mathcal{Q}$ is rough.\\
iii) Suppose $\mathcal{V}_1\cap \mathcal{Q}\neq\mathcal{V}_1$ and
$\mathcal{V}_2\cap \mathcal{Q}\neq\mathcal{V}_2$. Let $x\in
\mathcal{V}$ such that $x\in \mathcal{V}_1$ then $x\in
\mathbb{U}_\mathcal{A}(\mathcal{Q})$ because
$\mathcal{O}(x)=\mathcal{V}_1$ and $\mathcal{O}(x)\cap
\mathcal{Q}\neq \emptyset$ but $\mathcal{O}(x)\nsubseteq
\mathcal{Q}$. Similarly, if $x\in \mathcal{V}_2$ then $x\in
\mathbb{U}_\mathcal{A}(\mathcal{Q})$. Also, for every $x\in
\mathcal{V}$ we have $\mathcal{O}(x)\nsubseteq \mathcal{Q}$ which
gives that $\mathbb{L}_\mathcal{A}(\mathcal{Q})=\emptyset$ so
$\mathcal{Q}$ is a rough set in $\mathcal{G}$.
\end{proof}

In the following proposition, we discuss lower and upper
approximation of a subset $\mathcal{Q}$ of complete bipartite graph
$\mathcal{K}_{m,n}$.

\begin{prop}\label{5}
Let $\mathcal{K}_{m,n}=(\mathcal{V}_1|\mathcal{V}_2)$ is a complete
bipartite graph where $\mathcal{V}_1=\{x_1,x_2,\dots,x_m\}$ and
$\mathcal{V}_2=\{y_1,y_2,\dots,y_n\}$. Let $\mathcal{Q}$ and
$\mathcal{A}$ are two subsets of $\mathcal{V}(\mathcal{K}_{m,n})$
such that $\mathcal{Q}\neq\mathcal{V}$. Then
\begin{enumerate}
  \item  \begin{center}
  $\mathbb{L}_{\mathcal{A}}(\mathcal{Q})=\left\{\begin{array}{lll}
  \mathcal{V}_1 & \mbox{if} & \mathcal{V}_1\subseteq\mathcal{Q}\\
  \mathcal{V}_2 & \mbox{if} & \mathcal{V}_2\subseteq\mathcal{Q}\\
  \emptyset & \mbox{ } & otherwise.
  \end{array}\right.$
  \end{center}
  \item \begin{center}
  $\mathbb{U}_{\mathcal{A}}(\mathcal{Q})=\left\{\begin{array}{lll}
  \mathcal{V}_1 & \mbox{if} & \mathcal{Q}\subseteq \mathcal{V}_1\\
  \mathcal{V}_2 & \mbox{if} & \mathcal{Q}\subseteq \mathcal{V}_2\\
  \mathcal{V} & \mbox{ } & otherwise.
  \end{array}\right.$
  \end{center}
  \item \begin{center} $\mathcal{Q}$ is $\mathcal{A}$-exact if and only if
  $\mathcal{Q}=\mathcal{V}_1$ or $\mathcal{V}_2$.
  \end{center}
\end{enumerate}
\end{prop}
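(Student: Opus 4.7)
The plan is to reduce everything to the observation, already noted by the authors just before the proposition, that for any non-empty attribute subset $\mathcal{A}$ the indiscernibility partition of $\mathcal{K}_{m,n}$ has exactly two blocks, namely $\mathcal{V}_1$ and $\mathcal{V}_2$. Once this is in hand, the definitions
\[
\mathbb{L}_{\mathcal{A}}(\mathcal{Q})=\bigcup\{\mathcal{D}\in\gamma_{\mathcal{A}}(\mathcal{G}):\mathcal{D}\subseteq\mathcal{Q}\},\qquad
\mathbb{U}_{\mathcal{A}}(\mathcal{Q})=\bigcup\{\mathcal{D}\in\gamma_{\mathcal{A}}(\mathcal{G}):\mathcal{D}\cap\mathcal{Q}\neq\emptyset\}
\]
reduce to simply checking, for each of the two blocks $\mathcal{V}_1,\mathcal{V}_2$, whether it is contained in $\mathcal{Q}$ (for $\mathbb{L}$) or meets $\mathcal{Q}$ (for $\mathbb{U}$). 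I would state this reduction at the outset and then argue parts (1)--(3) by a compact case analysis.

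For (1), I would observe that since $\mathcal{Q}\neq\mathcal{V}$ and $\mathcal{V}=\mathcal{V}_1\cup\mathcal{V}_2$, it is impossible to have both $\mathcal{V}_1\subseteq\mathcal{Q}$ and $\mathcal{V}_2\subseteq\mathcal{Q}$ simultaneously. Hence exactly one of three mutually exclusive cases occurs: $\mathcal{V}_1\subseteq\mathcal{Q}$ (giving $\mathbb{L}_{\mathcal{A}}(\mathcal{Q})=\mathcal{V}_1$ since $\mathcal{V}_2\not\subseteq\mathcal{Q}$), $\mathcal{V}_2\subseteq\mathcal{Q}$ (symmetric), or neither partite class is contained in $\mathcal{Q}$ (giving $\mathbb{L}_{\mathcal{A}}(\mathcal{Q})=\emptyset$). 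For (2), I would run through the parallel trichotomy: $\mathcal{Q}\subseteq\mathcal{V}_1$ means $\mathcal{V}_2\cap\mathcal{Q}=\emptyset$ and $\mathcal{V}_1\cap\mathcal{Q}\neq\emptyset$, so $\mathbb{U}_{\mathcal{A}}(\mathcal{Q})=\mathcal{V}_1$; analogously $\mathcal{Q}\subseteq\mathcal{V}_2$ yields $\mathbb{U}_{\mathcal{A}}(\mathcal{Q})=\mathcal{V}_2$; and if $\mathcal{Q}$ meets both partite classes, both blocks lie in $\mathbb{U}_{\mathcal{A}}(\mathcal{Q})$, giving $\mathcal{V}$.

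For (3), I would simply overlay the two case tables from (1) and (2) and read off when $\mathbb{L}_{\mathcal{A}}(\mathcal{Q})=\mathbb{U}_{\mathcal{A}}(\mathcal{Q})$. Under the hypothesis $\mathcal{Q}\neq\mathcal{V}$, equality forces either $\mathcal{V}_1\subseteq\mathcal{Q}\subseteq\mathcal{V}_1$ or $\mathcal{V}_2\subseteq\mathcal{Q}\subseteq\mathcal{V}_2$, yielding $\mathcal{Q}=\mathcal{V}_1$ or $\mathcal{Q}=\mathcal{V}_2$; the converse is immediate from the formulas in (1) and (2). There is no real obstacle in this proof beyond bookkeeping; the only subtlety worth flagging is the implicit assumption that $\mathcal{A}$ is non-empty, which is needed to guarantee the two-block partition used throughout. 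If one wished to cover $\mathcal{A}=\emptyset$ as well, a brief remark would suffice, since then $\gamma_{\mathcal{A}}(\mathcal{K}_{m,n})=\mathcal{V}$ and $\mathcal{Q}\neq\mathcal{V}$ forces $\mathbb{L}_{\mathcal{A}}(\mathcal{Q})=\emptyset$ directly.
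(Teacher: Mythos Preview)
Your proposal is correct and follows essentially the same route as the paper: both arguments rest on the fact that $\gamma_{\mathcal{A}}(\mathcal{K}_{m,n})=\mathcal{V}_1\,|\,\mathcal{V}_2$ and then perform the obvious case analysis, with the paper phrasing things vertex-by-vertex via Theorem~\ref{4} (so that $\mathcal{O}_{\mathcal{A}}(x)=\mathcal{V}_i$ for $x\in\mathcal{V}_i$) while you work directly with the block-based formulas for $\mathbb{L}_{\mathcal{A}}$ and $\mathbb{U}_{\mathcal{A}}$. Your observation about the implicit non-emptiness of $\mathcal{A}$ is apt and is likewise tacit in the paper's own proof.
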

\begin{proof}\noindent
\begin{enumerate}
 \item Let $\mathcal{V}_1\subseteq\mathcal{Q}$. If $x\in \mathcal{V}_1$, then it follow by Theorem
 1.0.1
  that $\mathcal{O}_{\mathcal{A}}(x)\subseteq \mathcal{Q}$,
thus by definition of the lower approximation, we get
$\mathcal{V}_1\subseteq\mathbb{L}_{\mathcal{A}}(\mathcal{Q})$.
Moreover, if $x\in\
\mathcal{V}_2\cap\mathbb{L}_{\mathcal{A}}(\mathcal{Q})$, for some
$x\in\mathcal{V}$, then, again by Theorem 1.0.1 and definition of
the lower approximation, we get
$\mathcal{V}_2=\mathcal{O}_{\mathcal{A}}(x)\subseteq \mathcal{Q}$.
Because $\mathcal{V}_1|\mathcal{V}_2$ are set partition for
$\mathcal{V}$, so last inclusion implies $\mathcal{Q}=\mathcal{V}$,
a contradiction to our assumption. Thus,
$\mathcal{V}_1\subseteq\mathbb{L}_{\mathcal{A}}(\mathcal{Q})$ and
$\mathcal{V}_2\cap\mathbb{L}_{\mathcal{A}}(\mathcal{Q})=\emptyset$.
Similarly, if $\mathcal{V}_2\subseteq\mathcal{Q}$ then
$\mathbb{L}_{\mathcal{A}}(\mathcal{Q})=\mathcal{V}_2$. Hence let
$\mathcal{V}_1\nsubseteq\mathcal{Q}$ and
$\mathcal{V}_2\nsubseteq\mathcal{Q}$. but each vertex
$x\in\mathcal{V}$ is either in $\mathcal{V}_1$ or $\mathcal{V}_2$.
Therefore, by Theorem 1.0.1, we have
$\mathcal{O}_{\mathcal{A}}(x)=\mathcal{V}_1\nsubseteq\mathcal{Q}$
and
$\mathcal{O}_{\mathcal{A}}(x)=\mathcal{V}_2\nsubseteq\mathcal{Q}$,
$i.e.,$ $x\notin\mathbb{L}_{\mathcal{A}}(\mathcal{Q})$. Hence
$\mathbb{L}_{\mathcal{A}}(\mathcal{Q})=\emptyset$.
 \item Let $\mathcal{Q}\subseteq \mathcal{V}_1$. If $x\in \mathcal{V}_1$, then it follow by
 Theorem 1.0.1 that
 $\mathcal{O}_{\mathcal{A}}(x)=\mathcal{V}_1\cap\mathcal{Q}\neq\emptyset$,
 because $\mathcal{Q}$ is non-empty subset of $Q_1$. Hence
 $x\in\mathbb{U}_{\mathcal{A}}(\mathcal{Q})$. Moreover, if
 $x\in\mathbb{U}_{\mathcal{A}}(\mathcal{Q})$ then by definition of upper
 approximation we get
 $\mathcal{O}_{\mathcal{A}}(x)\cap\mathcal{Q}\neq\emptyset$. Let
 $y\in\mathcal{O}_{\mathcal{A}}(x)\cap\mathcal{Q}$. Since $y\in\mathcal{Q}\subseteq
 \mathcal{V}_1$, thus by Theorem 1.0.1 we have
 $\mathcal{V}_1=\mathcal{O}_{\mathcal{A}}(x)=\mathcal{O}_{\mathcal{A}}(y)$,
 therefore again by Theorem 1.0.1 we deduce that $x\in \mathcal{V}_1$. Hence
 $\mathbb{U}_{\mathcal{A}}(\mathcal{Q})=\mathcal{V}_1$. The case of $\mathcal{Q}\subseteq
 \mathcal{V}_2$ is similar. Finally, $\mathcal{Q}\nsubseteq \mathcal{V}_1$ and $\mathcal{Q}\nsubseteq
 \mathcal{V}_2$. Since $\mathcal{V}_1|\mathcal{V}_2$ is a partition of $\mathcal{V}$, which
 implies that $\mathcal{V}_1\cap\mathcal{Q}\neq\emptyset$ and
 $\mathcal{V}_2\cap\mathcal{Q}\neq\emptyset$. Now select arbitrary a vertex
 $x\in\mathcal{V}$, then either $x\in \mathcal{V}_1$ or $\mathcal{V}_2$. If $x\in
 \mathcal{V}_1$, then by Theorem 1.0.1 it follows that
 $\mathcal{O}_{\mathcal{A}}(x)\cap\mathcal{Q}=\mathcal{V}_1\cap\mathcal{Q}\neq\emptyset$.
 Thus, $x\in\mathbb{U}_{\mathcal{A}}(\mathcal{Q})$. Analogously, if
 $x\in \mathcal{V}_2$. Then it shows that
 $\mathcal{V}\subseteq\mathbb{U}_{\mathcal{A}}(\mathcal{Q})$.
 $i.e.,$ $\mathcal{V}=\mathbb{U}_{\mathcal{A}}(\mathcal{Q})$.
 \item This follow from the definition of exactness and Theorem
 1.0.1.
 \end{enumerate}
 \end{proof}

 In proposition \ref{10}, we provide complete description for the lower
approximations of graph $\mathcal{G}$.
\begin{prop}\label{10}
For a graph $\mathcal{G}$, let
$\mathcal{A},\mathcal{Q}\subseteq\mathcal{V}$. Then
\begin{center}
$\mathbb{L}_{\mathcal{A}}(\mathcal{Q})= \left\{\begin{array}{lll}
\mathcal{O}(\mathcal{A}) & \mbox{where} &
x\in\mathcal{A}\wedge(\mathcal{O}(x)\setminus
\mathcal{Q})=\emptyset\\
\emptyset & \mbox{ } & otherwise
\end{array}\right.$
\end{center}
\end{prop}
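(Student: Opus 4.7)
The plan is to reduce the lower-approximation computation to a block-by-block inclusion test against $\mathcal{Q}$, with Proposition \ref{22} supplying the block structure. Starting from a system $x_1,\ldots,x_k$ of orbit representatives in $\mathcal{A}$ with pairwise disjoint orbits and $\mathcal{O}(\mathcal{A})=\bigcup_{i=1}^k \mathcal{O}(x_i)$, Proposition \ref{22} yields $\gamma_{\mathcal{A}}(\mathcal{G})=\mathcal{B}_{\mathcal{A}}(\mathcal{G})\,|\,\mathcal{O}(x_1)\,|\,\cdots\,|\,\mathcal{O}(x_k)$. Inserting this into the identity $\mathbb{L}_{\mathcal{A}}(\mathcal{Q})=\bigcup\{\mathcal{D}\in\gamma_{\mathcal{A}}(\mathcal{G}):\mathcal{D}\subseteq\mathcal{Q}\}$ turns the proposition into the question of which of the blocks $\mathcal{O}(x_i)$ sit inside $\mathcal{Q}$; equivalently, via the trivial rewrite $\mathcal{O}(x_i)\subseteq\mathcal{Q}\iff\mathcal{O}(x_i)\setminus\mathcal{Q}=\emptyset$, which of the orbits of the elements of $\mathcal{A}$ are completely swallowed by $\mathcal{Q}$.

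The case analysis then writes itself. If every $x\in\mathcal{A}$ satisfies $\mathcal{O}(x)\setminus\mathcal{Q}=\emptyset$, each orbit block contributes in full, and their disjoint union $\bigcup_{i=1}^k\mathcal{O}(x_i)=\mathcal{O}(\mathcal{A})$ realizes the lower approximation, giving the first branch of the formula. If instead some $x_j\in\mathcal{A}$ has $\mathcal{O}(x_j)\not\subseteq\mathcal{Q}$, then indiscernibility forces $\mathcal{O}(x_j)$ to appear as a single indivisible block of $\gamma_{\mathcal{A}}(\mathcal{G})$; no proper part of it enters $\mathbb{L}_{\mathcal{A}}(\mathcal{Q})$, and together with the disjointness of the orbit blocks this forces the ``otherwise'' conclusion $\mathbb{L}_{\mathcal{A}}(\mathcal{Q})=\emptyset$.

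The step that requires the most care is the treatment of the external block $\mathcal{B}_{\mathcal{A}}(\mathcal{G})=(\mathcal{O}(\mathcal{A}))^c$, which appears in $\gamma_{\mathcal{A}}(\mathcal{G})$ but is not visible in either branch of the stated formula. I expect to justify its omission by noting that the proposition is recording only the orbit contribution inside $\mathcal{O}(\mathcal{A})$, so the behaviour of $\mathcal{B}_{\mathcal{A}}(\mathcal{G})$ with respect to $\mathcal{Q}$ is implicitly set aside in the statement. Once this bookkeeping is made explicit, both branches of the formula fall out immediately from Proposition \ref{22} and the definition of $\mathbb{L}_{\mathcal{A}}(\mathcal{Q})$.
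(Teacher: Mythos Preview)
Your plan follows the paper's proof closely, though you make the block structure explicit via Proposition~\ref{22} whereas the paper works directly from the definition of $\mathbb{L}_{\mathcal{A}}$. The paper's argument is terse: it observes that if $x\in\mathcal{A}$ has $\mathcal{O}_{\mathcal{A}}(x)\setminus\mathcal{Q}=\emptyset$ then $\mathcal{O}_{\mathcal{A}}(x)\subseteq\mathbb{L}_{\mathcal{A}}(\mathcal{Q})$, and since $x$ is arbitrary this yields $\mathbb{L}_{\mathcal{A}}(\mathcal{Q})=\mathcal{O}(\mathcal{A})$; in the other case it notes that $\mathcal{O}_{\mathcal{A}}(x)\nsubseteq\mathcal{Q}$ for some $x$ means that block is excluded from $\mathbb{L}_{\mathcal{A}}(\mathcal{Q})$, and writes $\mathbb{L}_{\mathcal{A}}(\mathcal{Q})=\emptyset$ without further comment.

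Two remarks. First, you are right to flag the external block $\mathcal{B}_{\mathcal{A}}(\mathcal{G})=(\mathcal{O}(\mathcal{A}))^c$: the paper's proof silently ignores it as well, so your explicit acknowledgment is an improvement in honesty, not a deficiency relative to the paper. Second, your ``otherwise'' step---``together with the disjointness of the orbit blocks this forces $\mathbb{L}_{\mathcal{A}}(\mathcal{Q})=\emptyset$''---does not actually follow: disjointness of the $\mathcal{O}(x_i)$ does nothing to prevent the remaining orbit blocks $\mathcal{O}(x_i)$ with $i\neq j$ from lying inside $\mathcal{Q}$ and hence contributing to $\mathbb{L}_{\mathcal{A}}(\mathcal{Q})$. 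The paper makes precisely the same unjustified leap (from ``$\mathcal{O}_{\mathcal{A}}(x)\notin\mathbb{L}_{\mathcal{A}}(\mathcal{Q})$'' directly to ``$\mathbb{L}_{\mathcal{A}}(\mathcal{Q})=\emptyset$''). So this gap is inherited from the proposition's formulation rather than introduced by you; your argument matches the paper's in substance, and your reservations about both loose ends are well placed.
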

\begin{proof}
Let a vertex $x\in\mathcal{A}$ such that
$\mathcal{O}_{\mathcal{A}}(x)\setminus \mathcal{Q}=\emptyset$, which
give $\mathcal{O}_{\mathcal{A}}(x)\subseteq \mathcal{Q}$, implies
that
$\mathcal{O}_{\mathcal{A}}(x)\in\mathbb{L}_{\mathcal{A}}(\mathcal{Q})$.
Since $x$ is chosen arbitrary, hence $\mathbb{L}_{\mathcal{A}}(\mathcal{Q})=\mathcal{O}(\mathcal{A})$.\\
For $\mathcal{O}_{\mathcal{A}}(x)\setminus
\mathcal{Q}\neq\emptyset$,
 $\mathcal{O}_{\mathcal{A}}(x)\nsubseteq \mathcal{Q}$ giving that
$\mathcal{O}_{\mathcal{A}}(x)\notin\mathbb{L}_{\mathcal{A}}(\mathcal{Q})$
implies that $\mathbb{L}_{\mathcal{A}}(\mathcal{Q})=\emptyset$.
\end{proof}

In proposition \ref{11}, we provide a necessary and sufficient
condition for a subset $\mathcal{Q}\subseteq\mathcal{V}$ to be
exact.
 \begin{prop}\label{11}
For a graph $\mathcal{G}$, let
$\mathcal{A},\mathcal{Q}\subseteq\mathcal{V}$. Then $\mathcal{Q}$ is
exact with respect to $\mathcal{A}$, if and only if
$\mathcal{Q}=\cup_{x\in \mathcal{Q}}\mathcal{O}_{\mathcal{A}}(x)$.
\end{prop}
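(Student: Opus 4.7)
The plan is to exploit the standard chain $\mathbb{L}_{\mathcal{A}}(\mathcal{Q})\subseteq\mathcal{Q}\subseteq\mathbb{U}_{\mathcal{A}}(\mathcal{Q})$ (immediate from the definitions, since $x\in\mathcal{O}_{\mathcal{A}}(x)$ always), so that $\mathcal{A}$-exactness reduces to both containments becoming equalities, i.e., $\mathcal{Q}=\mathbb{L}_{\mathcal{A}}(\mathcal{Q})=\mathbb{U}_{\mathcal{A}}(\mathcal{Q})$. The equivalent condition $\mathcal{Q}=\bigcup_{x\in\mathcal{Q}}\mathcal{O}_{\mathcal{A}}(x)$ is exactly the statement that $\mathcal{Q}$ is a union of $\mathcal{A}$-granules, so the content of the proposition is that $\mathcal{A}$-exact sets are precisely the ones saturated under $\equiv_{\mathcal{A}}$.

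For the forward direction I would assume $\mathbb{L}_{\mathcal{A}}(\mathcal{Q})=\mathbb{U}_{\mathcal{A}}(\mathcal{Q})$ and derive the saturation equality by showing both inclusions. The inclusion $\mathcal{Q}\subseteq\bigcup_{x\in\mathcal{Q}}\mathcal{O}_{\mathcal{A}}(x)$ holds trivially because each $x\in\mathcal{Q}$ lies in its own class. For the reverse, take $y\in\mathcal{O}_{\mathcal{A}}(x)$ with $x\in\mathcal{Q}$; then $\mathcal{O}_{\mathcal{A}}(y)\cap\mathcal{Q}\ni x$, so $y\in\mathbb{U}_{\mathcal{A}}(\mathcal{Q})=\mathbb{L}_{\mathcal{A}}(\mathcal{Q})\subseteq\mathcal{Q}$, giving $y\in\mathcal{Q}$.

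For the backward direction I would assume $\mathcal{Q}=\bigcup_{x\in\mathcal{Q}}\mathcal{O}_{\mathcal{A}}(x)$ and show $\mathbb{U}_{\mathcal{A}}(\mathcal{Q})\subseteq\mathbb{L}_{\mathcal{A}}(\mathcal{Q})$, which combined with the always-true reverse containment forces exactness. Pick $y\in\mathbb{U}_{\mathcal{A}}(\mathcal{Q})$ and choose $z\in\mathcal{O}_{\mathcal{A}}(y)\cap\mathcal{Q}$. Since $y\equiv_{\mathcal{A}}z$, classes coincide: $\mathcal{O}_{\mathcal{A}}(y)=\mathcal{O}_{\mathcal{A}}(z)$. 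The hypothesis together with $z\in\mathcal{Q}$ yields $\mathcal{O}_{\mathcal{A}}(z)\subseteq\bigcup_{x\in\mathcal{Q}}\mathcal{O}_{\mathcal{A}}(x)=\mathcal{Q}$, hence $\mathcal{O}_{\mathcal{A}}(y)\subseteq\mathcal{Q}$, so $y\in\mathbb{L}_{\mathcal{A}}(\mathcal{Q})$.

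The only mildly delicate point is the backward step, and the key lever there is the basic fact that two $\mathcal{A}$-granules are either equal or disjoint; once that is invoked, everything collapses to set-theoretic bookkeeping. No case split on the structure of $\mathcal{A}$ (as in Proposition \ref{22}) is needed, since the argument only uses that $\equiv_{\mathcal{A}}$ is an equivalence relation whose classes are the $\mathcal{O}_{\mathcal{A}}(x)$.
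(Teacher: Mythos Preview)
Your proof is correct and follows essentially the same approach as the paper's own proof: both directions reduce to the observation that $\mathcal{A}$-exact sets are precisely those saturated under $\equiv_{\mathcal{A}}$, via the chain $\mathbb{L}_{\mathcal{A}}(\mathcal{Q})\subseteq\mathcal{Q}\subseteq\mathbb{U}_{\mathcal{A}}(\mathcal{Q})$. Your argument is in fact more detailed than the paper's---in the backward direction the paper simply asserts that $\mathcal{Q}=\bigcup_{x\in\mathcal{Q}}\mathcal{O}_{\mathcal{A}}(x)$ ``clearly implies'' $\mathbb{L}_{\mathcal{A}}(\mathcal{Q})=\mathbb{U}_{\mathcal{A}}(\mathcal{Q})$, while you spell out the $\mathbb{U}_{\mathcal{A}}(\mathcal{Q})\subseteq\mathbb{L}_{\mathcal{A}}(\mathcal{Q})$ inclusion explicitly.
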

\begin{proof} Suppose $\mathcal{Q}$ is exact then for $x \in \mathcal{Q}$,
$\mathcal{O}_{\mathcal{A}}(x)\subseteq \mathcal{Q}$. As $x$ is arbitrary,
$\cup_{x\in \mathcal{Q}}\mathcal{O}_{\mathcal{A}}(x)\subseteq\mathcal{Q}$.
For each $x\in \mathcal{Q}$, $x \in \mathcal{O}_{\mathcal{A}}(x)$ which yields that
$\mathcal{Q} \subseteq \cup_{x\in \mathcal{Q}}\mathcal{O}_{\mathcal{A}}(x)$.
Hence $\mathcal{Q}=\cup_{x\in \mathcal{Q}}\mathcal{O}_{\mathcal{A}}(x)$. Conversely,
suppose $\mathcal{Q}=\cup_{x\in \mathcal{Q}}\mathcal{O}_{\mathcal{A}}(x)$ which
clearly implies $\mathbb{L}_{\mathcal{A}}(\mathcal{Q})= \mathbb{U}_{\mathcal{A}}(\mathcal{Q})$
giving that $\mathcal{Q}$ is exact.
\end{proof}

A graph $\mathcal{G}$ in which no two vertices are similar is called
$\textit{rigid}$ graph. In such graphs,
$\gamma_{\mathcal{A}}(\mathcal{G})$ consists of blocks having
singleton elements only, which implies
$\mathbb{L}_{\mathcal{A}}(\mathcal{Q})
=\mathbb{U}_{\mathcal{A}}(\mathcal{Q})$. Hence every subset of the
vertex set of a rigid graph is exact.

\subsection{Rough Membership Function and Dependency}
Now, we introduce and present results on rough membership function, positive region and degree of dependency of
graphs using orbits of the graphs.

Let $\mathcal{G}=(\mathcal{V},\mathcal{E})$ and $\mathcal{A},\mathcal{Q}\subseteq\mathcal{V}$. The rough membership
function on the vertex set $\mathcal{V}$ is defined by:
\begin{center}
$\mu_{\mathcal{Q}}^{\mathcal{A}}(x)=\frac{|\{y\in \mathcal{Q}:
\mathcal{O}(x)\cap \mathcal{A}=\mathcal{O}(y)\cap
\mathcal{A}\}|}{|\{y\in \mathcal{V}: \mathcal{O}(x)\cap
\mathcal{A}=\mathcal{O}(y)\cap \mathcal{A}\}|}.$
\end{center}
For $\mathcal{A},\mathcal{Q}\subseteq \mathcal{V}$,
$POS_{\mathcal{A}}(\mathcal{Q})=\{x\in \mathcal{V}: (y\in
\mathcal{V} \wedge \mathcal{O}(x)\cap \mathcal{A}=
\mathcal{O}(y)\cap \mathcal{A})\Rightarrow (\mathcal{O}(x)\cap
\mathcal{Q}=\mathcal{O}(y)\cap \mathcal{Q})\}$. The number
$deg_{\mathcal{A}}({\mathcal{Q}})=\frac{|POS_{\mathcal{A}}({\mathcal{Q}})|}{|\mathcal{V}|}$
is referred to as $\mathcal{A}$-degree dependency of $\mathcal{Q}$. It is easy to note that
for two different attribute sets $\mathcal{A}$ and $\mathcal{Q}$ which yield the same partition,
$deg_{\mathcal{A}}({\mathcal{Q}})=1$. Further, if $\mathcal{G}$ has $k$ orbits and $\mathcal{A}$
has non-empty intersection with $k$ or $k-1$ orbits of $\mathcal{G}$, $deg_{\mathcal{A}}({\mathcal{V}})=1$.
\par
In proposition \ref{12}, we examine the rough membership function of
complete bipartite graph $\mathcal{K}_{m,n}$ and compute
$\mathcal{A}$-positive region of $\mathcal{Q}$ and the degree of
dependency.

\begin{prop}\label{12}
For a complete bipartite graph $\mathcal{K}_{m,n}$ with bipartition
$(\mathcal{V}_1|\mathcal{V}_2)$, let $\mathcal{A}$ and $\mathcal{Q}$
are two subset of $\mathcal{V}(\mathcal{K}_{m,n})$. Then
\begin{enumerate}
\item [$(i)$.]
\begin{center}
$\mu_{\mathcal{Q}}^{\mathcal{A}}(x)=\left\{\begin{array}{lll}
|\mathcal{V}_1\cap\mathcal{Q}|/|\mathcal{V}_1| & \mbox{if} & x\in \mathcal{V}_1\\
|\mathcal{V}_2\cap\mathcal{Q}|/|\mathcal{V}_2| & \mbox{if} & x\in
\mathcal{V}_2
\end{array}\right.$
\end{center}
\end{enumerate}
\begin{enumerate}
\item [$(ii)$.]
\begin{center}
$POS_{\mathcal{A}}({\mathcal{Q}})=\left\{\begin{array}{lll}
 \emptyset & \mbox{if} &
 \mathcal{A}=\emptyset\wedge\mathcal{Q}\neq\emptyset\\
 \mathcal{V} & \mbox{ } & otherwise
 \end{array}\right.$

\end{center}
\end{enumerate}
\begin{enumerate}
\item [$(iii)$.]
\begin{center}
$deg_{\mathcal{A}}(\mathcal{Q})=\left\{\begin{array}{lll}
 0 & \mbox{if} &
 \mathcal{A}=\emptyset\wedge\mathcal{Q}\neq\emptyset\\
 1 & \mbox{ } & otherwise
 \end{array}\right.
$
\end{center}
\end{enumerate}
\end{prop}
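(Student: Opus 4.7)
The plan is to leverage the structural observation noted just above the proposition: for any non-empty attribute subset $\mathcal{A}$, the indiscernibility partition of $\mathcal{K}_{m,n}$ is $\gamma_{\mathcal{A}}(\mathcal{K}_{m,n}) = \mathcal{V}_1 | \mathcal{V}_2$, and the two orbits of $\mathcal{K}_{m,n}$ coincide with the partite sets $\mathcal{V}_1$ and $\mathcal{V}_2$. With this in hand, each of the three parts reduces to a short verification once an appropriate case split on $\mathcal{A}$ is made.

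For $(i)$, I would note that the displayed formula implicitly assumes $\mathcal{A} \neq \emptyset$; granting this, the equivalence class $\mathcal{O}_{\mathcal{A}}(x)$ equals $\mathcal{V}_i$ whenever $x \in \mathcal{V}_i$, so substituting into the defining quotient $\mu_{\mathcal{Q}}^{\mathcal{A}}(x) = |\mathcal{O}_{\mathcal{A}}(x) \cap \mathcal{Q}|/|\mathcal{O}_{\mathcal{A}}(x)|$ returns $|\mathcal{V}_i \cap \mathcal{Q}|/|\mathcal{V}_i|$ directly.

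For $(ii)$, I would split on whether $\mathcal{A}$ is empty. When $\mathcal{A} \neq \emptyset$, the equality $\mathcal{O}(x) \cap \mathcal{A} = \mathcal{O}(y) \cap \mathcal{A}$ forces $x$ and $y$ into the same partite set, so $\mathcal{O}(x) = \mathcal{O}(y)$ and therefore $\mathcal{O}(x) \cap \mathcal{Q} = \mathcal{O}(y) \cap \mathcal{Q}$ holds automatically, giving $POS_{\mathcal{A}}(\mathcal{Q}) = \mathcal{V}$. When $\mathcal{A} = \mathcal{Q} = \emptyset$, both intersections are trivially empty and the implication defining $POS$ is vacuously true, so again $POS_{\mathcal{A}}(\mathcal{Q}) = \mathcal{V}$. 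In the remaining case $\mathcal{A} = \emptyset$ and $\mathcal{Q} \neq \emptyset$, every pair of vertices becomes indiscernible; picking any $z \in \mathcal{Q}$ lying in some $\mathcal{V}_i$ together with $x \in \mathcal{V}_i$ and $y \in \mathcal{V}_{3-i}$ shows that $\mathcal{O}(x) \cap \mathcal{Q}$ contains $z$ while $\mathcal{O}(y) \cap \mathcal{Q}$ does not, so no vertex can satisfy the defining implication and $POS_{\mathcal{A}}(\mathcal{Q}) = \emptyset$.

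Part $(iii)$ then drops out in one line from $(ii)$ by dividing $|POS_{\mathcal{A}}(\mathcal{Q})|$ by $|\mathcal{V}|$. The only real obstacle is the bookkeeping around the degenerate case $\mathcal{A} = \emptyset$, which falls outside the scope of the cited description of $\gamma_{\mathcal{A}}(\mathcal{K}_{m,n})$ and must be handled directly from the definitions; everything else is routine substitution.
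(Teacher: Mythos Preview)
Your proposal is correct and follows essentially the same route as the paper. For part $(ii)$ the paper phrases the case analysis via the containment form $\mathcal{O}_{\mathcal{A}}(x)\subseteq\mathcal{O}_{\mathcal{Q}}(x)$ of the positive region (comparing the partitions $\gamma_{\mathcal{A}}$ and $\gamma_{\mathcal{Q}}$ in each of the four cases), whereas you work directly with the implication form $\mathcal{O}(x)\cap\mathcal{A}=\mathcal{O}(y)\cap\mathcal{A}\Rightarrow\mathcal{O}(x)\cap\mathcal{Q}=\mathcal{O}(y)\cap\mathcal{Q}$; these are equivalent reformulations of the same argument, and parts $(i)$ and $(iii)$ match the paper almost verbatim.
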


\begin{proof}
$(i)$. According to proposition \ref{5}, we know that
$\mathcal{O}_{\mathcal{A}}(x)=\mathcal{V}_i$, if and only if $x\in
\mathcal{V}_i$, for $i$=1,2, therefore,
\begin{center}
$\mathcal{O}_{\mathcal{A}}(x)\cap\mathcal{Q}=\left\{\begin{array}{lll}
|\mathcal{V}_1\cap\mathcal{Q}| & \mbox{if} & x\in \mathcal{V}_1\\
|\mathcal{V}_2\cap\mathcal{Q}| & \mbox{if} & x\in \mathcal{V}_2
\end{array}\right.$
\end{center}
Hence the proof follow directly by definition of rough membership.\\
$(ii)$. If $\mathcal{A}=\emptyset$ and $\mathcal{Q}\neq\emptyset$
then $\gamma_{\mathcal{A}}(\mathcal{K}_{m,n})=\mathcal{V}$. As
$\gamma_{\mathcal{Q}}(\mathcal{K}_{m,n})=\mathcal{V}_1|\mathcal{V}_2$.
Therefore, $\mathcal{O}_{\mathcal{A}}(x)=\mathcal{V}$ and
$\mathcal{O}_{\mathcal{Q}}(x)=\mathcal{V}_i$ for some $i=1,2$ which
implies that
$\mathcal{O}_{\mathcal{A}}(x)\nsubseteq\mathcal{O}_{\mathcal{Q}}(x)$
$\forall x\in\mathcal{V}$, hence
$POS_{\mathcal{A}}(\mathcal{Q})=\emptyset$ if
$\mathcal{A}=\mathcal{Q}=\emptyset$ then
$\gamma_{\mathcal{A}}(\mathcal{K}_{m,n})=\gamma_{\mathcal{Q}}(\mathcal{K}_{m,n})=\mathcal{V}$,
therefore
$\mathcal{O}_{\mathcal{A}}(x)=\mathcal{V}\subseteq\mathcal{O}_{\mathcal{Q}}(x)=\mathcal{V}$
$\forall$ $x\in\mathcal{V}$. Hence
$POS_{\mathcal{A}}(\mathcal{Q})=\mathcal{V}$. If
$\mathcal{A}\neq\emptyset$ and $\mathcal{Q}=\emptyset$ then
$\gamma_{\mathcal{A}}(\mathcal{K}_{m,n})=\mathcal{V}_1|\mathcal{V}_2$
by Proposition \ref{5}, and
$\gamma_{\mathcal{Q}}(\mathcal{K}_{m,n})=\mathcal{V}$. Which implies
that $\mathcal{O}_{\mathcal{A}}(x)=\mathcal{V}_i$ for some $i=1,2$
and $\mathcal{O}_{\mathcal{Q}}(x)=\mathcal{V}$ for all
$x\in\mathcal{V}$. Hence
$POS_{\mathcal{A}}(\mathcal{Q})=\mathcal{V}$. Finally, if
$\mathcal{A}\neq\emptyset$ and also $\mathcal{Q}\neq\emptyset$ then
by Proposition \ref{5}, we have
$\gamma_{\mathcal{A}}(\mathcal{K}_{m,n})=\gamma_{\mathcal{Q}}(\mathcal{K}_{m,n})=\mathcal{V}_1|\mathcal{V}_2$.
This implies that
$\mathcal{O}_{\mathcal{A}}(x)=\mathcal{O}_{\mathcal{Q}}(x)=\mathcal{V}_i$
for all $x\in\mathcal{V}$ and for some $i$=1,2. Thus in this case we
have $POS_{\mathcal{A}}(\mathcal{Q})=\mathcal{V}$.\\
$(iii)$. It is following directly from definition and $(ii)$.
\end{proof}


Note that, for complete graphs $K_{n}$ and cycle graphs $C_{n}$,
$|POS_{\mathcal{A}}({\mathcal{Q}})|=|\mathcal{V}|$ yields that
$deg_{\mathcal{A}}({\mathcal{Q}})=1$. It is observed that for
$K_{n}$, $C_{n}$, we have $\gamma_{\mathcal{V}}=\mathcal{O}_{1}$ and
for complete bipartite graphs $\mathcal{K}_{m,n}$, we have
$\gamma_{\mathcal{V}}=\mathcal{O}_{m}|\mathcal{O}_{n}$. Therefore,
$deg_{\mathcal{A}}({\mathcal{Q}})=1$ when $m=n$ and
$deg_{\mathcal{A}}({\mathcal{Q}})< 1$ when $m\neq n$. For a path
graph $P_{n}$, with $k$ orbits and  $\mathcal{A},
\mathcal{Q}\subseteq \mathcal{V}$ such that $ \mathcal{A}\subseteq
\mathcal{Q}$ and $\mathcal{A}\cap \mathcal{O}_{k-i}$ and
$\mathcal{Q}\cap \mathcal{O}_{k-1}$ is non-empty $\forall \, i > 1$
then $deg_{\mathcal{A}}(\mathcal{Q})< 1$ and
$deg_{\mathcal{Q}}(\mathcal{A})= 1$.

\par It is easy to see that for a graph $\mathcal{G}$ with $\mathcal{A},\mathcal{Q}\subseteq\mathcal{V}$,
$\mu_{\mathcal{Q}}^{\mathcal{A}}(x)=0$ if $\gamma(x)\cap
\mathcal{Q}= \emptyset$, $\mu_{\mathcal{Q}}^{\mathcal{A}}(x)< 1$ if
$\gamma(x)\cap \mathcal{Q}\neq \emptyset$ and
$\mu_{\mathcal{Q}}^{\mathcal{A}}(x)=1$ if $\gamma(x)\subset
\mathcal{Q}$ where $\gamma(x)$ is the block of partition $\gamma$
containing $x$.

\section{Essential Sets and Discernibility Matrix of Graphs}

In this section, we will introduce essential sets as well as discernibility matrices of graphs. We will also study the relationship between these two important concepts.
\subsection{Essential Sets of Graphs} Chiaselotti et al. \cite{7} presented classical model of Pawlak's core in
a more general way. The core of an information system $\mathcal{I}$
associated to $\mathcal{G}$ is the intersection of all reducts,
represented by $\mathcal{CORE}(\mathcal{G})$.
Hence removal of any attribute belonging to core of a graph leads to change in the indiscernibility partition.
In case, core is empty, this
extension becomes important because it provides with a set on minimum number of vertices
whose removal yields a partition different from the one yielded by all attributes. Concepts of definition \ref{33} were introduced by Chiaselotti et al. in \cite{7}.
\begin{defn}\label{33}

A subset $\mathcal{S}\subseteq \mathcal{A}$, is called
$\mathcal{I}$-essential of $\mathcal{G}$, if
$\gamma_{\mathcal{A}\setminus\mathcal{S}}(\mathcal{G})\neq\gamma_{\mathcal{A}}(\mathcal{G})$
and $\forall \, \mathcal{Q}\subsetneq\mathcal{S}$ we have
$\gamma_{\mathcal{A}\setminus\mathcal{Q}}(\mathcal{G})=\gamma_{\mathcal{A}}(\mathcal{G}).$
\end{defn}

$\mathsf{ESS}(\mathcal{I})$ represents the collection of all
$\mathcal{I}$-essential subsets of $\mathcal{G}$. For
$l\in\{1,2,\dots,n\}$, set
\begin{center}
$\mathsf{ESS}_l(\mathcal{I})=\{\mathcal{S}\in\mathsf{ESS}(\mathcal{I}):
|\mathcal{S}|=l\}$
\end{center}
and essential numerical sequence of $\mathcal{I}$ is defined by
\begin{center}
$ens(\mathcal{I})=(|\mathsf{ESS}_1(\mathcal{I})|,|\mathsf{ESS}_2(\mathcal{I})|,
\dots,|\mathsf{ESS}_n(\mathcal{I})|)$.
\end{center}
Finally, essential dimension of $\mathcal{I}$ is defined as the
positive integer
\begin{center}
$\mathsf{E}dim(\mathcal{I})=min\{l:|\mathsf{ESS}_l(\mathcal{I})|\neq
0\}$. 
\end{center}
Note that, for a graph with at most two orbits,
$ESS(\mathcal{I})=\emptyset$. For complete graphs $K_{n}$, cycles
$C_{n}$ and complete bipartite graphs $\mathcal{K}_{m,n}$,
$ESS(\mathcal{I})=\emptyset$. As there exists only one orbit for
$K_{n}$ and $C_{n}$, but for  $\mathcal{K}_{m,n}$, there exists one
orbit when $m=n$ and two orbits when  $m\neq n$. Also, it is
observed that if $\mathcal{A}\subseteq \mathcal{V}$, then there does
not exist any set $\mathcal{S}$ for complete graphs, cycles and complete
bipartite graphs such that $\gamma_{\mathcal{A}}(\mathcal{I})\neq
\gamma_{\mathcal{A}\backslash\mathcal{S}}(\mathcal{I})$. Therefore,
$ens(K_{n})=(0, 0, 0,...,0)$ $\forall \, n$, $ens(C_{n})=(0, 0,
0,...,0)$  $\forall \, n$ and $ens(\mathcal{K}_{m,n})=(0, 0,
0,...,0)$ $\forall \, m, \, n$.
%

\par For a path $P_n$ on $n\geq 5$ vertices with vertex set
$\mathcal{V}=\{x_1,x_2, \ldots,x_n\}$ and edge set
$\mathcal{E}=\{{x_ix_{i+1}}|1\leq i \leq n-1\}$. It is observed that
$\{\{x_i,x_{n-i+1}\}|1\leq i \leq \lfloor\frac{n}{2}\rfloor\}$ is the set of orbits of
$P_n$. Note that each orbit has at most two elements. It can be
easily seen that $ESS_l(\mathcal{I})=\emptyset$ for $l<3$.
For odd $n$, $\{x_{\frac{n+1}{2}}\}$ forms an orbit on one vertex and $ESS_{3}(\mathcal{I})$
is the union of orbit $\{x_{\frac{n+1}{2}}\}$ with any other
orbit of $P_{n}$. Also, $ESS_{4}(\mathcal{I})$ is the union of any
two orbits of $P_{n}$.
Further, observe that if $n=2k$ for $k>2$, then each orbit of
path graph $P_n$ will consist of exactly two vertices. Therefore,
${E}dim(P_n)=4$. Similarly, if $n=2k+1$ for $k\geq2$, then
$P_n$ will contain exactly one orbit of order one and all other
orbits of order two. Then ${E}dim(P_n)=3$.  Also
note that, for a rigid graph, ${E}dim(\mathcal{G})=2$ as
the cardinality of each orbit of a rigid graph is one.
\\

%


Hence, we have the following straightforward proposition for path
graphs $P_{n}$.

\begin{prop}
If $\mathcal{G}$ is a path graph then we have the following: \\
$(i)$ ${ESS}(P_{n})=\emptyset$ for $n\leq 4.$ \\
$(ii)$ $|{ESS_{3}}(P_{2k+1})|= k; k\geq 2$ and
$|{ESS_{3}}(P_{2k})|= 0; k\geq 2.$ \\
$(iii)$ $|{ESS_{4}}(P_{2k})|= \binom{k}{2}; n\geq 4$ \\
$(iv)$ $ens(P_{2k+1})=(0, 0, k, \binom{k}{2}, 0,...)$ if $k\geq 2$
and $ens(P_{2k})=(0, 0, 0, \binom{k}{2}, 0,...)$ if $ k\geq 3$.
\end{prop}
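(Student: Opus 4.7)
The plan is to first derive a clean combinatorial characterization of which subsets of $\mathcal{V}(P_n)$ are essential and then count them in each case. Recall that $P_n$ with vertex set $\{x_1,\ldots,x_n\}$ has orbits $O_i=\{x_i,x_{n-i+1}\}$ for $1\le i\le \lfloor n/2\rfloor$, together with the singleton $\{x_{(n+1)/2}\}$ when $n$ is odd. So for $n=2k$ there are $k$ orbits each of size $2$, while for $n=2k+1$ there are $k$ orbits of size $2$ and one orbit of size $1$.

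Next I would reduce the question about essential sets to a purely combinatorial one about orbits. Taking the implicit attribute set to be $\mathcal{A}=\mathcal{V}$, Proposition \ref{22} applied with attribute set $\mathcal{V}\setminus\mathcal{S}$ shows that $\gamma_{\mathcal{V}\setminus\mathcal{S}}(P_n)$ is obtained from the orbit partition $\gamma_{\mathcal{V}}(P_n)$ by collapsing those orbits that are wholly contained in $\mathcal{S}$ into a single block $\mathcal{B}_{\mathcal{V}\setminus\mathcal{S}}=\mathcal{V}\setminus\mathcal{O}(\mathcal{V}\setminus\mathcal{S})$. Hence $\gamma_{\mathcal{V}\setminus\mathcal{S}}=\gamma_{\mathcal{V}}$ if and only if at most one orbit is entirely inside $\mathcal{S}$. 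In particular, $\mathcal{S}$ changes the partition iff $\mathcal{S}$ swallows two (or more) complete orbits. Minimality in Definition \ref{33} then forces $\mathcal{S}$ to be the disjoint union of exactly two orbits, with no extra vertices: any spare vertex would give a proper subset still containing those two complete orbits, contradicting minimality, and any union of three or more orbits strictly contains a union of two.

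With this characterization, each part becomes straightforward counting. For (ii) and (iii) with $n=2k+1$ and $k\ge 2$, the essential sets of size $3$ are exactly the unions of the singleton orbit with one of the $k$ size-$2$ orbits, giving $|ESS_3(P_{2k+1})|=k$; those of size $4$ are pairs of distinct size-$2$ orbits, giving $\binom{k}{2}$. For $n=2k$ with $k\ge 2$ there is no singleton orbit, so no size-$3$ essential set exists, while pairs of size-$2$ orbits give $|ESS_4(P_{2k})|=\binom{k}{2}$. For (i), when $n\le 2$ there is only one orbit so no pair exists; for $n=3,4$ the only candidate union-of-two-orbits is $\mathcal{V}$ itself, which is handled under the convention adopted for $ESS$ in the earlier examples of $K_n$, $C_n$ and $\mathcal{K}_{m,n}$ with two orbits. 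Finally, (iv) assembles (ii)--(iii) together with the observation that no essential set can have size exceeding the sum of the two largest orbit sizes, which is at most $4$ for paths, so all higher-index entries of $ens(P_n)$ vanish.

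The main obstacle is Step 2: pinning down exactly when removing $\mathcal{S}$ preserves the partition. The subtlety is that $\mathcal{S}$ may contain \emph{parts} of many orbits without changing the partition at all, so only full orbits matter, and this has to be read off carefully from Proposition \ref{22}. Once this is isolated, minimality collapses to the elementary fact that the inclusion-minimal sets containing two prescribed orbits are exactly the unions of two orbits, and the rest is bookkeeping on the orbit sizes of $P_n$.
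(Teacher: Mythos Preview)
Your proposal is correct and follows essentially the same route as the paper. The paper does not give a formal proof of this proposition; it presents the orbit structure of $P_n$ and the observation that essential sets are unions of two orbits in the discussion immediately preceding the statement, and then records the general ``union of two orbits'' fact separately as Proposition~\ref{20}. Your argument makes the same reduction explicit via Proposition~\ref{22}, handles the small cases $n\le 4$ through the paper's stated convention that graphs with at most two orbits have $\mathsf{ESS}(\mathcal{I})=\emptyset$, and then performs the same orbit-pair counting.
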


In the next proposition, we prove that essential set $\mathcal{S}$
for any graph $\mathcal{G}$ is exactly union of two orbits.

\begin{prop}\label{20}
$\mathcal{I}-essential$ set $\mathcal{S}$ is always union of two
orbits of graph $\mathcal{G}$.
\end{prop}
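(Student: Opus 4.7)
The plan is to reduce the problem, via Propositions \ref{22} and \ref{15}, to a counting statement about how many orbits of $\mathcal{G}$ are fully contained in $\mathcal{S}$. I take $\mathcal{A}=\mathbb{A}=\mathcal{V}$ throughout (the convention implicit in the notation $\mathsf{ESS}(\mathcal{I})$), so that $\mathcal{B}_{\mathcal{V}}(\mathcal{G})=\emptyset$; by Proposition \ref{22}, for any $\mathcal{T}\subseteq\mathcal{V}$ the set $\mathcal{B}_{\mathcal{V}\setminus\mathcal{T}}(\mathcal{G})$ is precisely the union of those orbits $\mathcal{O}_j$ with $\mathcal{O}_j\subseteq\mathcal{T}$, and $\gamma_{\mathcal{V}\setminus\mathcal{T}}(\mathcal{G})$ is determined by that collection of orbits.

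First I would argue that $\mathcal{S}$ is a union of complete orbits. Suppose, for contradiction, that some orbit $\mathcal{O}_j$ satisfies $\emptyset\neq\mathcal{S}\cap\mathcal{O}_j\subsetneq\mathcal{O}_j$. Pick $x\in\mathcal{S}\cap\mathcal{O}_j$ and set $\mathcal{Q}=\mathcal{S}\setminus\{x\}\subsetneq\mathcal{S}$. Because orbits are pairwise disjoint and $\mathcal{O}_j\not\subseteq\mathcal{S}$, the sets $\mathcal{V}\setminus\mathcal{S}$ and $\mathcal{V}\setminus\mathcal{Q}$ meet exactly the same collection of orbits, so $\mathcal{B}_{\mathcal{V}\setminus\mathcal{Q}}(\mathcal{G})=\mathcal{B}_{\mathcal{V}\setminus\mathcal{S}}(\mathcal{G})$. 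Case (1) of Proposition \ref{15} then gives $\gamma_{\mathcal{V}\setminus\mathcal{Q}}(\mathcal{G})=\gamma_{\mathcal{V}\setminus\mathcal{S}}(\mathcal{G})\neq\gamma_{\mathcal{V}}(\mathcal{G})$, contradicting the minimality clause in the definition of essential set. Hence $\mathcal{S}=\mathcal{O}_{j_1}\cup\cdots\cup\mathcal{O}_{j_k}$ for some distinct orbits $\mathcal{O}_{j_1},\dots,\mathcal{O}_{j_k}$.

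It remains to pin down $k=2$, again via Proposition \ref{15}. If $k\leq 1$, then $\mathcal{B}_{\mathcal{V}\setminus\mathcal{S}}(\mathcal{G})$ is either empty or a single orbit, and since $\mathcal{B}_{\mathcal{V}}(\mathcal{G})=\emptyset$, case (2) of Proposition \ref{15} forces $\gamma_{\mathcal{V}\setminus\mathcal{S}}(\mathcal{G})=\gamma_{\mathcal{V}}(\mathcal{G})$, contradicting the first clause of essentiality. If $k\geq 3$, take the proper subset $\mathcal{Q}=\mathcal{O}_{j_2}\cup\cdots\cup\mathcal{O}_{j_k}\subsetneq\mathcal{S}$; then $\mathcal{B}_{\mathcal{V}\setminus\mathcal{Q}}(\mathcal{G})$ equals the union of the $k-1\geq 2$ orbits $\mathcal{O}_{j_2},\dots,\mathcal{O}_{j_k}$, which differs from $\mathcal{B}_{\mathcal{V}}(\mathcal{G})=\emptyset$ and consists of at least two orbits, so neither case of Proposition \ref{15} applies. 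Thus $\gamma_{\mathcal{V}\setminus\mathcal{Q}}(\mathcal{G})\neq\gamma_{\mathcal{V}}(\mathcal{G})$, contradicting minimality. Therefore $k=2$ and $\mathcal{S}$ is the union of exactly two orbits.

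The main obstacle is keeping the two clauses of Proposition \ref{15} correctly matched against the two clauses in the definition of an essential set: essentiality demands the partition actually change, where case (2) is the danger and is what ultimately forces $k\geq 2$, while minimality demands that no proper subset already produces a change, where case (1) is the danger and is what both forces $\mathcal{S}$ to be a union of whole orbits and limits $k\leq 2$.
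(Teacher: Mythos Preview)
Your proof is correct and more complete than the paper's. The paper argues only by contradiction against the case where $\mathcal{S}$ is a union of \emph{more than two} orbits, exhibiting a proper subset $\mathcal{Q}$ that already changes the partition; it neither verifies that $\mathcal{S}$ must consist of whole orbits nor rules out $k\leq 1$. You instead route everything through Propositions~\ref{22} and~\ref{15}: the orbit-completeness step (removing a single vertex from a partially covered orbit leaves $\mathcal{B}_{\mathcal{V}\setminus\mathcal{T}}$ unchanged) and the lower bound $k\geq 2$ (case~(2) of Proposition~\ref{15} with $\mathcal{B}_{\mathcal{V}}=\emptyset$) are genuine additions, and your upper-bound argument is the same idea as the paper's but expressed via the failure of both cases of Proposition~\ref{15}. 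What you gain is a self-contained argument that actually establishes all three parts of the claim; what the paper's terser approach gains is brevity, at the cost of leaving the reader to supply the missing cases.
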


\begin{proof}
Assume contrary that set $\mathcal{S}\subseteq \mathcal{A}$ is a
union of more than two orbits of $\mathcal{G}$ then there exists
$\mathcal{Q}\subset \mathcal{S}$ such that
$\gamma_{\mathcal{A}\setminus \mathcal{Q}}(\mathcal{G})\neq
\gamma_{\mathcal{A}}(\mathcal{G})$, i.e., there exists $x, x\prime
\in \mathcal{A}\setminus \mathcal{Q}$ such that
$x\equiv_{\mathcal{A}\setminus \mathcal{Q}} x\prime $ and $\urcorner
(x\equiv_{\mathcal{A}} x\prime )$ which does not satisfy the
minimality condition for $\mathcal{S}$ to be essential. Hence,
$\mathcal{S}$ is a union of two orbits of a graph $\mathcal{G}$.
\end{proof}

\subsection{Discernibility Matrix of Graph}
In rough set theory, discernibility matrix is a tool to study
information system. Here, we concentrate on the structural concept
of discernibility matrix in the context of graph theory.

Let $\mathcal{I}=\langle \mathcal{V}, \mathcal{V}, \mathcal{V}al,
\mathcal{F}\rangle$ is an information system with
$\mathcal{V}=\{x_1,x_2,,\dots,x_n\}$. The
discernibility matrix $\Delta[\mathcal{I}]$ of $\mathcal{I}$ is an
$n\times n$ matrix with $ijth$ entry, $\Delta_{\mathcal{G}}(x_i,x_j)$, of the matrix is the attribute subset corresponding to the pair
$(x_i,y_j)$ given as:
\begin{center}
$\Delta_{\mathcal{G}}(x_i,x_j)=\{a\in \mathcal{A}:
\mathcal{F}(x_i,a)\neq \mathcal{F}(x_j,a)\}$.
\end{center}

Note that in context of granular referencing system, $\mathcal{A}$
serves as reference set and helps in identification whether given
two vertices are  identical. Two elements in an orbit are
similar for any choice of  $\mathcal{A}$.
Therefore, two vertices in different orbits are discernible by any
of the vertices in those orbits. We define the entries of the discernibility matrix in the following way:


For a graph $\mathcal{G}$, if $x_i,x_j \in \mathcal{V}$, then
\begin{center}
$\Delta_{\mathcal{G}}(x_i,x_j)=\left\{\begin{array}{lll}
\mathcal{O}(x_i)\cup\mathcal{O}(x_j) & \mbox{if} & x_i\notin \mathcal{O}(x_j)\\
\emptyset & \mbox{if} & x_i\in\mathcal{O}(x_j)
\end{array}\right.$
\end{center}

In the next theorem, we show that the discernibility matrices of graphs
always characterize graphs uniquely. 

\begin{thm}\label{18}
For two graphs $\mathcal{G}_1$ and $\mathcal{G}_2$ such that
$\mathcal{V}(\mathcal{G})=\mathcal{V}(\mathcal{G}_1)=\mathcal{V}(\mathcal{G}_2)$.
$\Delta[\mathcal{G}_1]=\Delta[\mathcal{G}_2]$ if and only if
$\mathcal{G}_1\cong\mathcal{G}_2$.
\end{thm}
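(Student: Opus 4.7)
The plan is to reduce the statement to an equivalence between orbit partitions, since the discernibility matrix as defined is determined entirely by the orbit structure of the graph. From $\Delta[\mathcal{G}]$ one recovers the orbit of each vertex $x_i$ as the row-set $\{x_j : \Delta_{\mathcal{G}}(x_i,x_j)=\emptyset\}$, and conversely the family of orbits determines every entry of $\Delta[\mathcal{G}]$ through the defining formula. So the task reduces to showing that, on a common vertex set, the orbit partitions of $\mathcal{G}_1$ and $\mathcal{G}_2$ coincide if and only if $\mathcal{G}_1 \cong \mathcal{G}_2$.

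For the forward direction I would take an isomorphism $\eta$ between $\mathcal{G}_1$ and $\mathcal{G}_2$, which is a permutation of the common vertex set preserving adjacency. Using that $\eta$ conjugates $\Gamma(\mathcal{G}_1)$ onto $\Gamma(\mathcal{G}_2)$, I would show that $\eta$ carries each $\Gamma(\mathcal{G}_1)$-orbit bijectively onto a $\Gamma(\mathcal{G}_2)$-orbit, and then argue that the orbit of any $x$ in $\mathcal{G}_1$ coincides with its orbit in $\mathcal{G}_2$ as a subset of $\mathcal{V}$. The matrix equality then follows entry-by-entry from the defining formula, splitting into the two cases $x_i \in \mathcal{O}(x_j)$ and $x_i \notin \mathcal{O}(x_j)$.

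For the backward direction, equality of the matrices delivers equal orbit partitions as noted above; I would then attempt to build an isomorphism by matching orbits on both sides and using vertex-transitivity within each orbit to piece together a global permutation preserving adjacency. This is where I expect the principal obstacle: having matching orbit partitions alone is generally not enough to force two graphs to be isomorphic (for instance, $C_4$ and $K_4$ on the same four-vertex set both have a single orbit, yet the two graphs are not isomorphic). The decisive step will therefore need either to extract extra adjacency information from the matrix beyond the bare partition data, or to exploit some implicit structural hypothesis on the class of graphs under consideration; clarifying precisely how the matrix forces isomorphism, and not merely equal orbit partitions, is the real work in completing the proof.
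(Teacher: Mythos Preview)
Your suspicion about the backward direction is not merely a gap to be filled: your counterexample is decisive. On the vertex set $\{1,2,3,4\}$, both $K_4$ and $C_4$ are vertex-transitive, so in each graph every pair $(x_i,x_j)$ satisfies $x_i\in\mathcal{O}(x_j)$ and hence $\Delta_{\mathcal{G}}(x_i,x_j)=\emptyset$. The two discernibility matrices are identical (the all-$\emptyset$ matrix), yet $K_4\not\cong C_4$. The paper's argument for this direction jumps from ``$\mathcal{F}_{\mathcal{G}_1}(x_i,x_j)=\mathcal{F}_{\mathcal{G}_2}(x_i,x_j)$'' (which concerns orbit membership, not adjacency) to ``preserving the adjacency and degrees of vertices resulting into $\mathcal{E}(\mathcal{G}_1)=\mathcal{E}(\mathcal{G}_2)$'' with no justification; your example shows no justification is possible. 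So the theorem as stated is false, and there is nothing to repair in your proof attempt for that direction.

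The forward direction also fails, and here there is a gap in your sketch as well. You correctly note that an isomorphism $\eta$ carries $\Gamma(\mathcal{G}_1)$-orbits to $\Gamma(\mathcal{G}_2)$-orbits, but you then assert that the orbit of $x$ in $\mathcal{G}_1$ equals its orbit in $\mathcal{G}_2$ \emph{as a subset of} $\mathcal{V}$. That does not follow: $\eta$ need not fix orbits setwise. Concretely, take $\mathcal{V}=\{1,2,3\}$ with $\mathcal{G}_1$ the path $1\text{--}2\text{--}3$ and $\mathcal{G}_2$ the path $2\text{--}1\text{--}3$. These are isomorphic, but the orbit partitions are $\{1,3\}\mid\{2\}$ and $\{2,3\}\mid\{1\}$ respectively, so for instance $\Delta_{\mathcal{G}_1}(1,3)=\emptyset$ while $\Delta_{\mathcal{G}_2}(1,3)=\{1,2,3\}$. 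The paper dismisses this direction as ``trivial'', but it is not even true. What \emph{would} be true is that $\Delta[\mathcal{G}_1]$ and $\Delta[\mathcal{G}_2]$ agree up to a simultaneous permutation of rows and columns by $\eta$; that, however, is not the statement claimed.
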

\begin{proof}
For any two graphs $\mathcal{G}_1$ and $\mathcal{G}_2$, if
$\mathcal{G}_1\cong\mathcal{G}_2$ then without loss of generality,
$\Delta[\mathcal{G}_1]=\Delta[\mathcal{G}_2]$ (which is a trivial
case). Conversely, suppose that
$\Delta_{\mathcal{G}_1}(x_i,x_j)=\Delta_{\mathcal{G}_2}(x_i,x_j)$
for $x_i,x_j\in\mathcal{V}(\mathcal{G})$, then by definition, it
follows that $\mathcal{F}_{\mathcal{G}_1}(x_i,x_j)=0$ if $x_i \notin
\mathcal{O}(x_j)$ and $\mathcal{F}_{\mathcal{G}_2}(x_i,x_j)=1$ if
$x_i \in \mathcal{O}(x_j)$. We claim that if
$\mathcal{F}_{\mathcal{G}_1}(x_i,x_j)=1=\mathcal{F}_{\mathcal{G}_2}(x_i,x_j)$,
then $x_i \in \mathcal{O}(x_j)$ and $x_j \in \mathcal{O}(x_i)$ which yields that
$\mathcal{O}(x_i)=\mathcal{O}(x_j)$. Thus, preserving the adjacency
and degrees of vertices resulting into
$\mathcal{E}(\mathcal{G}_1)=\mathcal{E}(\mathcal{G}_2)$. Hence
$\mathcal{G}_1\cong\mathcal{G}_2$.

\end{proof}

Please note that rows and columns of discernibility matrix
corresponding to similar vertices are equal. Hence instead of
considering individual elements, their orbits can be considered.
Therefore, we first evaluate the classes of orbits of graphs and
then consider the rows and columns of discernibility matrix in terms
of the orbits of graphs. We name the discernibility matrix obtained by considering orbits instead of vertices as Quotient Discernibility Matrix (QDM). QDM has order equal to the number of orbits of the graphs. Rows and columns of QDM are labeled by orbits of the graph and entries of QDM are defined as follows:
For a graph $\mathcal{G}$, with two arbitrary orbits $\mathcal{O}_i$ and $\mathcal{O}_j$
\begin{center}
$\Delta_{\mathcal{G}}(\mathcal{O}_i,\mathcal{O}_j)=\left\{\begin{array}{lll}
\mathcal{O}_i\cup\mathcal{O}_j & \mbox{if} & i \neq j\\
\emptyset & \mbox{if} & i=j.
\end{array}\right.$
\end{center}


Next, we represent by $EQDM(\mathcal{I}(\mathcal{G}))$, the collection of all distinct entries of $\Delta[\mathcal{I}]$.

In Proposition \ref{19}, we describe a relationship between $EQDM(\mathcal{I}(\mathcal{G}))$ and $ESS(\mathcal{I}(\mathcal{G}))$, the family of $\mathcal{I}$-essential
subsets.

\begin{prop}\label{19}
Let $\mathcal{G}$ be a graph then $EQDM(\mathcal{I}(\mathcal{G}))=ESS(\mathcal{I}(\mathcal{G}))$. \end{prop}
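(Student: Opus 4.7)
The plan is to prove the two inclusions $ESS(\mathcal{I}(\mathcal{G})) \subseteq EQDM(\mathcal{I}(\mathcal{G}))$ and $EQDM(\mathcal{I}(\mathcal{G})) \subseteq ESS(\mathcal{I}(\mathcal{G}))$ separately, using Proposition \ref{20} for the first and Proposition \ref{22} for the second. The essential sets, by Proposition \ref{20}, are precisely (some of) the unions of two distinct orbits; the non-diagonal entries of the QDM are, by its very definition, exactly the unions $\mathcal{O}_i\cup\mathcal{O}_j$ for $i\neq j$. So both families live inside the same ambient collection $\{\mathcal{O}_i\cup\mathcal{O}_j : i\neq j\}$, and the real work is identifying which unions are essential.

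For $ESS \subseteq EQDM$, I would take $\mathcal{S}\in ESS(\mathcal{I}(\mathcal{G}))$ and invoke Proposition \ref{20} to write $\mathcal{S}=\mathcal{O}_i\cup\mathcal{O}_j$ for some $i\neq j$. By the definition of the QDM, $\Delta_{\mathcal{G}}(\mathcal{O}_i,\mathcal{O}_j)=\mathcal{O}_i\cup\mathcal{O}_j=\mathcal{S}$, so $\mathcal{S}\in EQDM(\mathcal{I}(\mathcal{G}))$. This direction is immediate.

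For $EQDM \subseteq ESS$, fix a non-empty entry $\mathcal{S}=\mathcal{O}_i\cup\mathcal{O}_j$ with $i\neq j$; I need to verify the two clauses of Definition \ref{33} with attribute set $\mathcal{V}$. For the first clause, apply Proposition \ref{22} to $\mathcal{V}\setminus\mathcal{S}$: this set meets every orbit except $\mathcal{O}_i$ and $\mathcal{O}_j$, so $\mathcal{O}(\mathcal{V}\setminus\mathcal{S})=\bigcup_{l\neq i,j}\mathcal{O}_l$ and $\mathcal{B}_{\mathcal{V}\setminus\mathcal{S}}(\mathcal{G})=\mathcal{O}_i\cup\mathcal{O}_j$. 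Hence $\gamma_{\mathcal{V}\setminus\mathcal{S}}(\mathcal{G})$ has $\mathcal{O}_i\cup\mathcal{O}_j$ as a single block, which differs from $\gamma_{\mathcal{V}}(\mathcal{G})=\mathcal{O}_1|\mathcal{O}_2|\ldots|\mathcal{O}_k$, where $\mathcal{O}_i$ and $\mathcal{O}_j$ are split. For the minimality clause, pick any $\mathcal{Q}\subsetneq\mathcal{S}$; then at least one of $\mathcal{O}_i\setminus\mathcal{Q}$, $\mathcal{O}_j\setminus\mathcal{Q}$ is non-empty, so $\mathcal{V}\setminus\mathcal{Q}$ meets every orbit of $\mathcal{G}$ with at most one exception. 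By Proposition \ref{22}, the resulting partition is either all orbits separately (when both $\mathcal{O}_i\cap(\mathcal{V}\setminus\mathcal{Q})$ and $\mathcal{O}_j\cap(\mathcal{V}\setminus\mathcal{Q})$ are non-empty) or has one orbit recovered as the $\mathcal{B}$-block, so in every case $\gamma_{\mathcal{V}\setminus\mathcal{Q}}(\mathcal{G})=\gamma_{\mathcal{V}}(\mathcal{G})$.

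The only delicate point in the argument is the minimality clause: one must check that even borderline choices such as $\mathcal{Q}=\mathcal{O}_i$ leave the partition unchanged, which is where the fact that the $\mathcal{B}$-block of Proposition \ref{22} can be a full orbit does the work. Combining the two inclusions gives $EQDM(\mathcal{I}(\mathcal{G}))=ESS(\mathcal{I}(\mathcal{G}))$ as required.
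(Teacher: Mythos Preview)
Your proposal is correct and follows essentially the same approach as the paper: both prove the two inclusions separately, invoke Proposition~\ref{20} for $ESS\subseteq EQDM$, and verify Definition~\ref{33} directly for $EQDM\subseteq ESS$. Your argument is in fact slightly more explicit than the paper's, since you justify the partition (in)equalities via Proposition~\ref{22} rather than asserting them, and you give a unified treatment of the minimality clause where the paper splits into three cases ($T\subseteq\mathcal{O}_i$, $T\subseteq\mathcal{O}_j$, $T$ meeting both); but the underlying reasoning is identical.
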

\begin{proof}
Let $S \in ESS(\mathcal{I}(\mathcal{G}))$ then by Proposition \ref{20},
 $S=\mathcal{O}_i \cup \mathcal{O}_j$. Note that $\Delta_{\mathcal{G}}(\mathcal{O}_i,\mathcal{O}_j)
 =\mathcal{O}_i\cup\mathcal{O}_j$ hence $S \subseteq EQDM(\mathcal{I(G)})$ and
 $ESS(\mathcal{I}(\mathcal{G}))\subseteq EQDM(\mathcal{I}(\mathcal{G}))$.
 Conversely, suppose that $S \in EQDM (\mathcal{I(G)})$ then there exist $\mathcal{O}_i$
 and $\mathcal{O}_j$ such that $\Delta_{\mathcal{G}}(\mathcal{O}_i,\mathcal{O}_j)=\mathcal{O}_i\cup\mathcal{O}_j$.
Note that $\gamma_{\mathcal{V}\setminus \{\mathcal{O}_i \cup \mathcal{O}_j\}}\neq \gamma_{\mathcal{V}}$
which satisfies first condition for $S$ to be an essential set.
Let $T\subset \mathcal{O}_i \cup \mathcal{O}_j$ then we have three possibilities. (i).
$T \subseteq \mathcal{O}_i$, (ii). $T \subseteq \mathcal{O}_j$, (iii). $T \cap \mathcal{O}_i\neq \emptyset$
and $T \cap \mathcal{O}_j\neq \emptyset$. It is easy to see in all these three cases that $\gamma_{\mathcal{V}\setminus T }(\mathcal{
G})= \gamma_{\mathcal{V}}(\mathcal{G})$ which satisfies the second condition for $S$ to be an essential set. Hence $S \in EES(\mathcal{I(G)})$ and $EQDM(\mathcal{I}(\mathcal{G}))\subseteq ESS(\mathcal{I}(\mathcal{G}))$ which gives required result.

\end{proof}

\begin{exam}
Consider the graph $\mathcal{G}$ shown in FIGURE 1 and its discernibility matrix is given in the following TABLE 1:
\begin{table}[h]
\centering
\begin{tabular}{|c|c|c|c|} \hline
  & $\mathcal{O}(0)$ & $\mathcal{O}(2)$ & $\mathcal{O}(3)$ \\
  \hline
  $\mathcal{O}(0)$  & $\emptyset$  & $\ast$ & $\ast$ \\
  $\mathcal{O}(2)$  & $\mathcal{O}(0)\cup\mathcal{O}(2)$ & $\emptyset$ & $\ast$ \\
  $\mathcal{O}(3)$  & $\mathcal{O}(0)\cup\mathcal{O}(3)$ & $\mathcal{O}(2)\cup\mathcal{O}(3)$ & $\emptyset$\\
  \hline
\end{tabular}
\vspace*{5mm} \caption{Discernibility Matrix $\Delta[\mathcal{I}]$}
\end{table}
\FloatBarrier
\begin{center}
${ESS}(\mathcal{I}(\mathcal{G}))=
EQDM(\mathcal{I}(\mathcal{G}))=\{\mathcal{O}(1)\mathcal{O}(2),
\mathcal{O}(1)\mathcal{O}(3), \mathcal{O}(2)\mathcal{O}(3) \}$
\end{center}
\FloatBarrier
\end{exam}

\section*{Conclusions}
We have used orbits of graphs to construct information systems to study
graphs using rough set theory. We have studied indiscernibility partition, lower and upper approximations of subsets of vertices of graphs, the rough membership function and rough positive region for
some well-known families of graphs like cycle, complete and complete
bipartite graphs. We have also studied the essential sets of graphs and the
discernibility matrix of graphs in terms of orbit partitions. Identifying vertices having similar characteristics simplifies the structures of graphs. The terminology emerging from the merger of rough set theory and graph theory will be useful for exploring new problems associated to symmetries of graphs.

\end{document}